\documentclass[10pt]{amsart}
\usepackage{latexsym, amssymb, amscd, amsfonts,pstricks,enumitem,amsmath, young}
\usepackage{latexsym, amssymb, amscd, amsfonts,pstricks,enumitem,amsmath}
\usepackage{amsmath,amsthm,amssymb,mathrsfs}
\usepackage{amsmath, amsfonts, amssymb,amsthm}
\usepackage{amstext}
\usepackage{mathrsfs}
\usepackage{tikz-cd}
\usepackage{fancyhdr}
\setlength{\parskip}{.1 in plus 2pt minus 2pt}
\usepackage{verbatim}

\numberwithin{equation}{section}

\theoremstyle{plain}
\newtheorem{theorem}{Theorem}[section]

\newtheorem{def-thm}[theorem]{Definition-Theorem}
\newtheorem{lemma}[theorem]{Lemma}

\newtheorem{definition}[theorem]{Definition}

\theoremstyle{definition}
\newtheorem{remark}[theorem]{Remark}
\newtheorem{example}[theorem]{Example}

\newtheorem{conjecture}[theorem]{Conjecture}

\newcommand{\sq}[1]{\ifx#1([\else\ifx#1)]%
  \else\message{invalid use of "sq"}\fi\fi}

\DeclareMathOperator{\Supp}{Supp}

\DeclareMathSymbol{\idot}{\mathbin}{operators}{`\.}
\allowdisplaybreaks
\hfuzz50pc
\vfuzz50pc
\sloppy


\makeatother

\begin{document}
\title{ Hyperbolicity and GCD for $n+1$  divisors with non-empty intersection}
 
\author{Julie Tzu-Yueh Wang}
\address{Institute of Mathematics, Academia Sinica \newline
\indent No.\ 1, Sec.\ 4, Roosevelt Road\newline
\indent Taipei 10617, Taiwan}
\email{tywang@as.edu.tw}

\author{Zheng Xiao}
\address{Department of Mathematics, University of Colorado, Boulder
  \newline
\indent  2300 Colorado Avenue \newline
\indent  Boulder, CO 80309 USA} 
\email{zheng.xiao@colorado.edu}

\thanks{2020\ {\it Mathematics Subject Classification}: Primary 32H30; Secondary 32Q45 and 30D35}
\thanks{The  first-named author was supported in part by Taiwan's NSTC grant  113-2115-M-001-011-MY3.}

\begin{abstract} 
We study hyperbolicity  for quasi-projective varieties where the boundary divisor consists of $n+1$ numerically parallel effective divisors on a complex projective variety of dimension 
$n$, allowing non-empty intersection. Under explicit local conditions on 
$\beta$-constants or intersection multiplicities, we prove that all entire curves are algebraically degenerate.

Our approach extends the method of Levin–Huang–Xiao \cite{HLX} to higher dimensions, establishing a second main theorem for regular sequences of closed subschemes.  This also yields a GCD-type estimate in the same geometric setting.
\end{abstract}

\maketitle
\baselineskip=16truept

 \section{Introduction}\label{sec:intro}

The celebrated Green-Griffiths-Lang conjecture predicts the hyperbolic behavior of entire curves on quasi-projective varieties of (log-)general type:
\begin{conjecture}[Green-Griffiths-Lang Conjecture]\label{GGL}
Let $X$ be a smooth complex projective variety, and let $D$ be a normal crossings divisor on $X$.   Suppose that $X\setminus D$ is a variety of log-general type.   Then:   
\begin{enumerate}
\item
Every entire curve $f : \mathbb{C} \to X\setminus D$ is algebraically degenerate; that is, the image of $f$ is contained in a proper algebraic subvariety of $X$.
\item Furthermore, there exists a proper algebraic subvariety $Z \subset X$ such that the image of every entire curve $f : \mathbb{C} \to X$ is contained in $Z$
\end{enumerate}
\end{conjecture}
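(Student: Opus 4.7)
The plan is to attack this via the \emph{method of jet differentials} developed by Green--Griffiths, Bloch, Demailly, Siu, Merker, Rousseau, Diverio and others. The strategy has two main ingredients: (a) produce many global sections of a bundle of logarithmic jet differentials vanishing along an ample divisor, and (b) invoke the fundamental vanishing theorem, which says that for any such jet differential $P$ and any entire curve $f : \C \to X$, the pull-back $f^*P$ vanishes identically. Combined, these force $f$ into the common zero locus of the sections and, after enough differentiation, into a proper algebraic subvariety.

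Concretely, I would work on a log-smooth compactification and consider either the logarithmic Green--Griffiths bundle $E_{k,m}^{\mathrm{GG}}\Omega^*_X(\log D)$ or the Demailly--Semple tower $E_{k,m}\Omega^*_X(\log D)$ of invariant jet differentials of order $k$ and weight $m$. The log-general type hypothesis is equivalent to the bigness of $K_X + D$, and the first key step is to establish a holomorphic Morse-type inequality of the form
\[
h^0\bigl(X,\, E_{k,m}\Omega^*_X(\log D) \otimes \OO(-mA)\bigr) > 0
\]
for $k \gg n = \dim X$, $m \to \infty$, and some ample $A$. Once such jet differentials $P_1,\dots,P_r$ are in hand, I would apply Siu's \emph{slanted vector field} technique on the jet bundle: differentiating the $P_j$ along meromorphic vector fields of controlled pole order yields enough differential equations on $f$ to pin it into a proper subvariety $Z \subset X$, giving part (i). For part (ii), one must show that the $Z$ so constructed is independent of $f$; this follows if the jet differentials and slanting vector fields depend only on the pair $(X,D)$, so that their common differential zero locus is a fixed proper subvariety containing the image of every entire curve.

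The main obstacle --- and the reason the conjecture remains open in full generality --- is step (a): producing enough logarithmic jet differentials from the bare bigness of $K_X + D$. Current Morse-inequality estimates for jet bundles are known to yield positivity of $h^0$ only under stronger geometric hypotheses, such as those used by Diverio--Merker--Rousseau and Brotbek for generic hypersurfaces of high degree in $\PP^{n+1}$, or Demailly's constructions in dimension two. A genuine proof of the full GGL conjecture would therefore require either a substantial breakthrough in the positivity theory of (logarithmic) jet bundles, or a new method bypassing the jet-differential framework entirely. The present paper sidesteps this obstruction by imposing explicit local conditions (on $\beta$-constants or intersection multiplicities) that allow one to deploy a Nevanlinna-theoretic second main theorem in the spirit of \cite{HLX} in place of the unavailable jet-differential input, and thereby obtain algebraic degeneracy in the restricted geometric setting studied here rather than in the full generality of the conjecture above.
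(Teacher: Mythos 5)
The statement you were asked about is Conjecture~\ref{GGL} itself: it is stated in the paper purely as background and motivation, and the paper contains no proof of it --- indeed it is a famous open problem. Your submission correctly recognizes this: what you give is a survey of the jet-differential strategy (Green--Griffiths bundles, Demailly--Semple towers, the fundamental vanishing theorem, Siu's slanted vector fields) together with an honest admission that step (a), producing enough logarithmic jet differentials from the bare bigness of $K_X+D$, is exactly the point where the method breaks down in general. So there is no gap to flag relative to a paper proof, because neither you nor the paper proves the conjecture; but be clear that what you wrote is a plan of attack, not a proof, and should not be presented as one.

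It is worth noting how your sketch relates to what the paper actually does. The paper's contribution (Theorem~\ref{mainTheorem2}, proved via Theorems~\ref{trungeneral} and \ref{complexgcd2}) establishes Conjecture~\ref{GGL}(i)-type degeneracy only in the special setting of $n+1$ numerically parallel divisors with non-empty intersection, under explicit conditions on $\beta$-constants or local intersection multiplicities, and the method is entirely Nevanlinna-theoretic: a second main theorem for regular sequences of closed subschemes, proved by combining Autissier-type moving filtrations with filtrations by order of vanishing along zero-dimensional subschemes (following \cite{HLX} and the Ru--Vojta framework), plus a blow-up argument using Lemma~\ref{exclemma}. No jet differentials or vector-field techniques appear. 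So if your proposal were aimed at the paper's actual theorems rather than the conjecture, it would constitute a genuinely different route --- one that, as you yourself observe, currently lacks the positivity input needed to run, whereas the paper's filtration method succeeds precisely because the extra local hypotheses substitute for that missing input.
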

Part (ii) is commonly referred to as the \emph{strong Green-Griffiths-Lang conjecture} (see \cite[Conjecture 6.6.30]{NW}).
 
Substantial progress has been made toward this conjecture in various settings. In this paper, we focus on the case where $D$ is the union of  $n+1$ numerically parallel effective divisors. A collection of  effective divisors  $D_1,\hdots,D_q$ on  $X$, $q\ge 2$,  
is said to be {\it numerically parallel} if there exist positive integers $a_i$ such that $a_1D_1, \ldots, a_{n+1}D_{n+1}$ are all numerically equivalent.  A basic example is when $X=\mathbb P^n$, in which any collection of hypersurfaces of the same degree is numerically equivalent. 

 
 Conjecture \ref{GGL} (i) has been  proved by Noguchi, Winkelmann and Yamanoi  in \cite{NWY07} for the case where $X=\mathbb P^n$ and $D$ consists of $n+1$ normal crossings hypersurfaces.  Furthermore, the GCD method developed in  \cite{GNSW} provides an explicit formulation of the exceptional set $Z$ in Conjecture \ref{GGL} (ii).  Recently, Ru and the first author established 
in \cite{RW24}  a version of Campana orbifold conjecture in the setting of  $n+1$ normal crossings numerically parallel divisors on a smooth complex project variety, which in turn implies Conjecture \ref{GGL} (i) for that case.

 In this paper, we turn to a different setting, where $D$  consists of $n+1$ numerically parallel effective divisors that  have non-empty intersection.  
To state our main result in this direction, we first recall the definition of the $\beta$-constant for a closed subscheme (\cite[Definition 1.2]{RW22}), which is a direct generalization of Autissier's $\alpha$-constant (\cite[Definition 2.4]{Aut11}).
\begin{definition}\label{def_beta}
Let $\mathscr L$ be a big line sheaf and let $Y$ be a closed subscheme
 on a projective variety $X$.  We define
\begin{equation*}
  \beta(\mathscr L, Y)
    = \liminf_{N\to\infty}
      \frac {\sum_{m\ge 1}h^0(\mathscr L^N\otimes {\mathscr I}_{Y}^{m})} {Nh^0(\mathscr L^N)},
\end{equation*}
where ${\mathscr I}_{Y}$ is the ideal sheaf defining $Y$. 
\end{definition}

We adopt the following convention from \cite{HLX} for decomposing zero-dimensional subschemes:
\begin{definition}\label{local_beta}
Let  $Y$ be a zero-dimensional closed subscheme
 of a complex projective variety $X$ with ${\rm Supp} Y=\{Q_1,\hdots,Q_m\}$.  Then we write 
 $$
 Y=Y_{Q_1}+\cdots+Y_{Q_m},
 $$  
 where $Y_{Q_i}$ is a closed subscheme supported only at the point $Q_i$
 \end{definition}

Our main theorem relies on a notion of ``proper intersection" for divisors.
\begin{definition}
    Let $D_1,\ldots,D_q$ be effective Cartier divisors on a variety $X$. We say $D_1,\ldots,D_q$ intersect properly if for any $I \subset \{1,\ldots,q\}$ and any $x \in \bigcap_{i \in I}  D_i$, the sequence $(\phi_i)_{i \in I}$ is a regular sequence in the local ring $\mathcal{O}_{X,x}$, where $\phi_i$ are the local defining functions of $D_i$, $1 \leq i \leq q$.
\end{definition}
Of course, this notion of proper intersection can be generalized to closed subschemes; see, for example, \cite{Voj23}.

We now state our main theorem on hyperbolicity in the setting where the $n+1$ divisors are numerically parallel and have nontrivial intersection.

\begin{theorem}\label{mainTheorem2}
Let $X$ be a complex projective variety  of dimension $n$. 
Let $D_1, \ldots,D_{n+1}$ be effective Cartier  divisors on $X$ such that  there exist positive integers $a_1,\ldots, a_{n+1}$ such that $a_1D_1, \ldots, a_{n+1}D_{n+1}$ are all numerically equivalent to an ample divisor $D$.  
Assume that   any $n$-tuple of divisors among  $D_1, \ldots,D_{n+1}$ intersect properly, and that
\begin{align*}
\bigcap_{i=1}^{n+1}D_i\neq \emptyset.
\end{align*}
Suppose further that for every point 
 $Q\in  \bigcap_{i=1}^{n+1}D_i  $ and every subset $I \subset \{1,\ldots,n+1\}$, with $|I|=n$,  the following inequality holds:
 \begin{align}
\label{betacond}
\beta(D,(\bigcap_{i \in I}a_iD_i)_Q)>1
\end{align}
 Then there exists a proper Zariski-closed subset $Z\subset X$ such that the image of  any non-constant holomorphic map $f:\mathbb{C}\to X\setminus{\bigcup_{i=1}^{n+1} } D_i$  is contained in $Z$.
    \end{theorem}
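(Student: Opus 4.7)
The plan is to extend the method of \cite{HLX} from dimension $2$ to arbitrary dimension $n$. The central new ingredient is a Second Main Theorem for regular sequences of closed subschemes, which, combined with a local proximity analysis at each common intersection point $Q\in \bigcap_i D_i$ where the $\beta$-constant hypothesis is exploited, forces algebraic degeneracy of any entire curve avoiding $\bigcup_i D_i$.

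The first step is the subscheme Second Main Theorem: for a $0$-dimensional closed subscheme $W\subset X$ whose ideal sheaf is locally generated by a regular sequence of $n$ sections of $\OO(D)$, and any holomorphic map $f:\CC\to X$ with Zariski-dense image,
\[
m_f(r,W)\;\leq\;\left(\frac{1}{\beta(D,W)}+\varepsilon\right)T_f(r,D)+S_f(r).
\]
I would prove this via the Ru--Vojta/Autissier filtration paradigm: filter $H^0(X,\OO(ND))$ by the powers $\mathscr I_W^m$, use the regular-sequence hypothesis (via Koszul resolutions) to identify the asymptotic dimensions of the filtered pieces so that $\beta(D,W)$ emerges as the weighted average of the filtration jumps, and conclude with a Wronskian/logarithmic-derivative estimate.

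Given this, assume for contradiction that a non-constant $f:\CC\to X\setminus\bigcup_i D_i$ has Zariski-dense image. Since $N_f(r,D_i)=0$, the First Main Theorem gives $\sum_{i=1}^{n+1}m_f(r,a_iD_i)=(n+1)T_f(r,D)+O(1)$. For each $Q\in\bigcap_iD_i$, pick local equations $g_i$ of $a_iD_i$; sorting $h_i(x)=|g_i(x)|$ increasingly as $h_{\sigma(1)}\leq\cdots\leq h_{\sigma(n+1)}$, the elementary inequality $h_{\sigma(1)}\cdots h_{\sigma(n+1)}\geq h_{\sigma(1)}^{\,n}h_{\sigma(n+1)}$, combined with $\lambda_{(\bigcap_{i\in I}a_iD_i)_Q}(x)=-\log\max_{i\in I}|g_i(x)|+O(1)$, yields the pointwise bound
\[
\sum_{i=1}^{n+1}\lambda_{a_iD_i,Q}(x)\;\leq\;n\,\lambda_{D_{\sigma(1)},Q}(x)+\lambda_{(\bigcap_{i\in I^*(x)}a_iD_i)_Q}(x)+O(1),
\]
with $I^*(x)=\{\sigma(2),\ldots,\sigma(n+1)\}$. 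A pigeonhole across the $n+1$ possible values of the arg-max $\sigma(1)$, refined by an Ahlfors-type passage to a dense subsequence of radii on which the arg-max is stable on a positive-measure subset of $|z|=r$, reduces this to a fixed choice $(i_Q,I_Q)$ per point $Q$. Applying the subscheme SMT above to each $(\bigcap_{i\in I_Q}a_iD_i)_Q$---which satisfies $\beta(D,(\bigcap_{i\in I_Q}a_iD_i)_Q)>1$ by hypothesis---together with the standard SMT for contributions of $D_i$ away from $\bigcap_iD_i$, gives
\[
\sum_{i=1}^{n+1}m_f(r,a_iD_i)\;\leq\;(n+1-\delta)T_f(r,D)+S_f(r)
\]
for some $\delta>0$, contradicting the First Main Theorem equality and forcing $f$ to be algebraically degenerate.

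The main obstacle is the precise form of the local proximity decomposition and its pigeonhole step: the pointwise inequality contains a max-type contribution $n\,\lambda_{D_{\sigma(1)},Q}(x)$ whose integrand depends on which $D_i$ realizes the local maximum at $f(z)$, so it does not integrate cleanly. Resolving this---passing from an $x$-dependent arg-max to a single fixed index $i_Q$ per $Q$ without dissipating the quantitative gain from $\beta>1$---is the most delicate step, and is where the richer combinatorics of $n+1$ subsets of size $n$ in dimension $n\geq 3$ makes the extension of \cite{HLX} genuinely new. A secondary task is to promote the per-$f$ degeneracy into a single exceptional Zariski-closed $Z\subset X$ independent of $f$; this follows from a standard compactness/uniformity argument on the possible limit subvarieties.
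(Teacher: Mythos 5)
Your outline has the right spirit (a Ru--Vojta-type SMT for subschemes, a pointwise proximity decomposition near $\bigcap_iD_i$, exploit $\beta>1$), but the argument as written has genuine gaps and cannot reach the stated conclusion. The pointwise bound $\sum_{i}\lambda_{a_iD_i}\le n\,\lambda_{D_{\sigma(1)}}+\lambda_W+O(1)$ is too lossy: it replaces each of the $n-1$ intermediate Weil functions by the largest one, and after integrating, $n\int\lambda_{D_{\sigma(1)(z)}}(f(z))\frac{d\theta}{2\pi}$ is \emph{not} bounded by $nT_f(r,D)$ --- since $\lambda_{D_{\sigma(1)}}=\max_i\lambda_{D_i}$, this integral exceeds every individual $m_f(D_i,r)\approx T_f(r,D)$ and is only a priori bounded by $\sum_i m_f(D_i,r)=(n+1)T_f(r,D)$. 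So the claimed estimate $\sum m_f(D_i,r)\le(n+1-\delta)T_f(r,D)$ does not follow. Compounding this, the subscheme $W=W(z)$ (which $Q$, which $I^*$, which $\sigma$) depends on $z$, so the proposed SMT for a single fixed subscheme does not apply; the sketched Ahlfors/pigeonhole fix does not work because even if the arg-max were stable on a positive-measure arc, the contribution from the complementary arcs is uncontrolled. The paper resolves both issues with Theorem \ref{trungeneral}: a \emph{combined} SMT whose basis of $H^0(X,\mathscr L^N)$ is adapted simultaneously to the Autissier filtration by the $D_i$ and to the filtration by powers of $\mathscr I_Y$, and whose left-hand side carries the max over configurations $(I,Y)$ inside the integral. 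It is then applied twice with convex weights $b$, $1-b$, using the exact identity $\sum\lambda_{D_i}=\sum_{i=1}^{n-1}\lambda_{D_{\sigma(i)}}+\lambda_{D_{\sigma(n)}}+\lambda_{D_{\sigma(n+1)}}$ rather than a lossy majorization.

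More fundamentally, even if your proximity estimate were tightened to $\sum_i m_f(D_i,r)+\gamma\,m_f(\bigcap_iD_i,r)\le(n+1+\varepsilon)T_f(r,D)$, this is \emph{not} a contradiction with the First Main Theorem: combined with $\sum m_f(D_i,r)\approx(n+1)T_f(r,D)$ it only yields the GCD bound $m_f(\bigcap_iD_i,r)\le\frac{2\varepsilon}{\gamma}T_f(r,D)$ (Theorem \ref{complexgcd2}). To convert this into algebraic degeneracy the paper performs an additional blowup $\pi:\tilde X\to X$ at a point $Q\in\bigcap_iD_i$ and uses Lemma \ref{exclemma}, which gives $\beta(\pi^*D-\delta E,\pi^*D-E)>\frac{1}{n+1}$ for the strict transforms $\tilde D_i=\pi^*D_i-E$; applying the SMT on $\tilde X$ and feeding in the GCD estimate to control $m_f(Q,r)$ forces $T_{D,f}(r)=O(1)$. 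Your proposal contains no analogue of this blowup step, which is where the actual contradiction comes from, so the argument as outlined cannot close.
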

\begin{remark}\label{localcondition}
\begin{enumerate}
\item The condition \eqref{betacond} on the $\beta$-constant is crucial and will be illustrated by examples in Section \ref{hyperbolic}. 
\item The inequality \eqref{betacond}  can be replaced by the following condition involving local multiplicities:
 \begin{align}\label{localmultip} 
   (D_i)^I_Q<\bigg(\frac{n}{n+1}\bigg)^n (D_i)^I, 
\end{align}
where $(D_i)^I_Q$ denotes the local intersection multiplicity of $D_i$ (for  $i\in I$) at the point  $Q$. We will establish this implication in Section~\ref{hyperbolic}.
 \end{enumerate}
\end{remark}

Allowing non-empty intersection introduces new geometric and analytic challenges, particularly in controlling the local behavior near the intersection locus. A more compelling perspective arises from studying the distribution of a set of integral points, which, in Vojta's dictionary relating Nevanlinna theory and Diophantine approximation, corresponds to the image of an entire curve.   As previously mentioned, various results have been established for Conjecture~\ref{GGL}, including the case where  $X=\mathbb P^n$ and $D$ consists of $n+1$ normal crossings hypersurfaces.  However, the analogous conjecture over number fields, namely, the Lang-Vojta conjecture, remains widely open. Notably, there are no known results even for $n=2$.     Nevertheless, Corvaja and Zannier \cite{CZ00}, as well as Levin, Huang, and the second author  \cite{HLX}, have obtained results on the degeneracy of integral points in the case of surfaces where 
$D$ consists of three numerically parallel divisors with non-empty intersection. The number field analogue of Theorem~\ref{mainTheorem2} holds and, in fact, extends the results of \cite{HLX}.  The proof of Theorem~\ref{mainTheorem2} builds on the  techniques  developed in \cite{HLX} for the surface case, together with a generalization of their main theorem to the setting of a regular sequence of closed subschemes of $X$, defined as follows:

\begin{definition}[{\cite[Definition 2.14]{HLX}}]
Let $Y_1\supset Y_2\supset \cdots\supset Y_m$ be closed subschemes of a projective variety $X$. We say that this is a {\it regular sequence of closed subschemes of $X$} if, for each $i\in \{1,\ldots, m\}$ and every point $P\in \Supp Y_i$, there exists a regular sequence $f_1,\ldots, f_i\in \mathcal{O}_{X,P}$ such that for $1\leq j\leq i$, the ideal sheaf of $Y_j$ is locally given by the ideal $(f_1,\ldots, f_j)$ in $\mathcal{O}_{X,P}$.
\end{definition}
\begin{remark}
    If divisors $D_1,\ldots, D_m$ intersect properly on $X$, then the chain of intersections $D_1\supset D_1\cap D_2\supset\cdots\supset \bigcap_{i=1}^m D_i$ forms a regular sequence of closed subschemes of $X$. Moreover, if $X$   is smooth and  $D_1,\ldots,D_m$ are in general position, then this chain of nested intersections is a regular sequence.
    \end{remark}

 The main theorem of \cite{HLX} refines the general theorem of Ru and Vojta \cite{RV20} in the surface case (see Theorem~\ref{generalRV}), extending the setting from divisors $D_i$ to sequences of closed subschemes such as $D_i \supset D_i \cap D_j$. This broader framework allowed the authors to establish several significant Diophantine results in \cite{HLX} and suggests a promising direction for further investigation.

We now state our generalization of this result, namely \cite[Theorem~1.4]{HLX}, to higher-dimensional settings.

 \begin{theorem}\label{trungeneral}
    Let $X$ be a complex projective variety of dimension $n$, and let $D_1,\cdots,D_q$  be effective Cartier divisors on  $X$.
  Let $\mathcal M$ be a finite collection of pairs $(I,Y)$, where 
$I \subset \{1,\ldots,q\}$ has cardinality $n-1$, $Y$ is a closed subscheme of $\bigcap_{i \in I} D_i$, and the divisors $D_i$, $i \in I$, intersect properly  such that $ \bigcap_{i \in I} D_i\supset Y $
is a  regular sequence. Let $\mathscr L$ be a big line sheaf on $X$.  
 Then, for every $\varepsilon>0$,  there exists a proper  Zariski-closed subset $Z\subset X$ such that for any non-constant holomorphic map $f:\mathbb{C}\to X$ with  $f(\mathbb{C})\not\subset Z$,  the following  inequality  holds:
 \begin{align}\label{mainineq}
\int_0^{2\pi} \max_{(I,Y) \in \mathcal{M}} \Bigg(
&\sum_{i \in I} \beta(\mathscr{L}, D_i)\lambda_{D_i}(f(re^{i\theta}))  
 + \Big( \beta(\mathscr{L}, Y) - \sum_{i \in I} \beta(\mathscr{L}, D_i) \Big) \lambda_Y(f(re^{i\theta}))
\Bigg)   \frac{d\theta}{2\pi}\notag\\
&\leq_{\operatorname{exc}} (1+\varepsilon) T_{\mathscr{L}, f}(r),
\end{align}
where the maximum is taken over all   $(I,Y)\in\mathcal M$, and the notation $\leq_{\operatorname{exc}}$ means the inequality holds for all $r\in \mathbb{R}^+$ except a set of finite Lebesgue measure.
\end{theorem}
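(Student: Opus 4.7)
The plan is to extend the filtration argument of Huang--Levin--Xiao~\cite{HLX} from the surface case ($n=2$) to arbitrary dimension and then apply the Ru--Vojta second main theorem (Theorem~\ref{generalRV}) to the resulting basis of sections.

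For each pair $(I,Y)\in\mathcal M$, with $|I|=n-1$, and for $N$ sufficiently large, I would construct a basis $\{s_1,\ldots,s_{M_N}\}$ of $H^0(X,\mathscr L^N)$ (with $M_N=h^0(X,\mathscr L^N)$) adapted to a double filtration. The outer layer is the $\mathscr I_Y$-adic filtration $V_j:=H^0(X,\mathscr L^N\otimes\mathscr I_Y^j)$, which by definition of $\beta(\mathscr L,Y)$ yields $\sum_k\ord_Y(s_k)=\sum_{m\ge 1}\dim V_m=(\beta(\mathscr L,Y)+o(1))NM_N$. The regular sequence hypothesis $\bigcap_{i\in I}D_i\supset Y$ provides, at each $P\in\Supp Y$, local generators $g_1,\ldots,g_{n-1},g_n$ of $\mathcal O_{X,P}$ forming a regular sequence with $g_i$ a local equation for $D_i$ ($i\in I$) and $\mathscr I_{Y,P}=(g_1,\ldots,g_n)$. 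For a multi-index $\mathbf a=(a_i)_{i\in I}$ with $\sum_i a_i\le j$, the colon-ideal identity for regular sequences gives locally at $P$
\[
V_j\cap\prod_{i\in I}\mathscr I_{D_i}^{a_i}\ =\ g_1^{a_1}\cdots g_{n-1}^{a_{n-1}}\cdot(g_1,\ldots,g_n)^{j-\sum_i a_i}.
\]
Refining $V_j/V_{j+1}$ by $\mathbf a$ (and by the remaining $g_n$-direction within each graded piece) produces basis vectors $s$ with $\ord_{D_i}(s)\ge a_{i,s}$ for each $i\in I$ and $\ord_Y(s)\ge j_s$, the local ``leading monomial'' of $s$ being $\prod_{i\in I}g_i^{a_{i,s}}\cdot g_n^{j_s-\sum_ia_{i,s}}$.

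Translating to Weil functions: writing $s$ locally as $\prod_{i\in I}g_i^{a_{i,s}}\cdot h$ with $h\in(g_1,\ldots,g_n)^{j_s-\sum a_{i,s}}$ having pure $g_n$-leading term, one derives
\[
-\log\|s\|(P)\ \ge\ \sum_{i\in I}a_{i,s}\,\lambda_{D_i}(P)+\left(j_s-\sum_{i\in I}a_{i,s}\right)\lambda_Y(P)+O(1),
\]
either pointwise on the bulk of $X$ or, in regions where a single section is inadequate, after taking maxima over basis sections in the standard Ru--Vojta max-over-subsets fashion. Summing over $k$ and using the asymptotic identities $\sum_k j_k\sim\beta(\mathscr L,Y)NM_N$ and $\sum_k a_{i,k}\sim\beta(\mathscr L,D_i)NM_N$ ($i\in I$)---the first from the definition of $\beta$ and the second from compatibility of the refined filtration with the $\mathscr I_{D_i}$-adic one---the Ru--Vojta theorem applied to $\{s_k\}$ yields estimate~(\ref{mainineq}) for the single pair $(I,Y)$. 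To pass to the $\max$ over $\mathcal M$ inside the integral, I would run the construction for each pair and invoke Ru--Vojta on the union of the bases: since $\mathcal M$ is finite, at each $\theta$ the dominant pair is selected pointwise, and the proper Zariski-closed exceptional set $Z$ is assembled from the finitely many base loci together with the standard Ru--Vojta exceptional locus.

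The principal obstacle is the verification of the second asymptotic identity $\sum_k a_{i,k}\sim\beta(\mathscr L,D_i)NM_N$ in dimension $n\ge 3$. In the surface case~\cite{HLX} the inner filtration has a single direction and the count reduces to a one-dimensional summation that the authors carry out by hand; here the multi-index $\mathbf a$ varies over an $(n-1)$-dimensional simplex, and one must show that the regular sequence structure decouples the $\mathscr I_{D_i}$-adic and $\mathscr I_Y$-adic filtrations up to $o(NM_N)$ error. I would handle this by an iterated quotient argument (exploiting that a regular sequence remains regular modulo any initial segment) together with Hilbert--Samuel asymptotics for the graded algebra $\bigoplus_m\mathscr I_Y^m/\mathscr I_Y^{m+1}$, whose leading coefficients are governed by the Hilbert--Samuel multiplicity of $Y$ on $X$.
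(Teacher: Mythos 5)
The gap you flag at the end is not a technical nuisance but the crux, and your proposed fix does not address it. For \emph{any} basis $\{s_k\}$ of $H^0(X,\mathscr L^N)$ one has $\sum_k\ord_{D_i}(s_k)\le\sum_{m\ge 1}h^0(\mathscr L^N\otimes\mathscr I_{D_i}^m)$, with equality precisely when the basis is adapted to the $\mathscr I_{D_i}$-adic filtration. Your construction adapts to the $\mathscr I_Y$-adic filtration and then refines each graded piece $V_j/V_{j+1}$ by the multi-index $\mathbf a$; for $n\ge 3$ this means reconciling $n\ge 3$ filtrations with one basis, which is generally impossible as a matter of linear algebra. Consequently the achievable $\sum_k a_{i,k}$ can fall strictly below $\beta(\mathscr L,D_i)NM_N$, and the Hilbert--Samuel argument you sketch only controls the graded algebra $\bigoplus_m\mathscr I_Y^m/\mathscr I_Y^{m+1}$ (hence $\sum_k j_k$), not the $D_i$-orders of a $Y$-adapted basis. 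The ``iterated quotient'' idea does not decouple the filtrations globally because the dimension jumps in $H^0(X,\mathscr L^N\otimes\mathscr I_{D_i}^m)$ are independent global invariants, not determined by the local regular-sequence structure at $\Supp Y$.

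The paper's resolution is genuinely different rather than a more careful version of your count: it replaces the single multi-index refinement by Autissier's moving filtration. Lemma \ref{aut_main_lemma} (from \cite{RV20}) produces a \emph{family} of filtrations $\mathcal F(\sigma;\mathbf a)_x$ indexed by weight vectors $\mathbf a$ in a simplex $\Delta_\sigma$, and what is proved is a lower bound on the join $\bigvee_{\sigma,\mathbf a}(\mathcal B_{\sigma;\mathbf a})$ of the basis divisors, not a single-basis vanishing estimate for every $D_i$ at once. For each $z$ one chooses the weight $\mathbf a_z$ that achieves the join at $Q_z$, so that only two filtrations (the Autissier one at the fixed weight, and $\mathcal F(Y_z)$) need to be reconciled, which is always possible by \cite[Lemma 2.18]{HLX}. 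The $Y$-contribution is then extracted not by a refined count inside $V_j/V_{j+1}$ but by the additivity of $n_Y(\cdot)$ in Lemma \ref{orderlemma}, whose proof uses the regular-sequence hypothesis via the colon-ideal Lemma \ref{ideallemma} to bound $n_{Y_z}$ of the residual effective divisor $F_z$ from below. The conclusion comes from Theorem \ref{gsmt2} applied to the union of the finitely many constructed bases, not from Theorem \ref{generalRV}. Your colon-ideal observation is on target and is essentially Lemma \ref{ideallemma}, but the single-basis multi-index refinement must be abandoned in favor of the moving filtration for $n\ge 3$.
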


Finally, as in the surface case treated in \cite{HLX}, the above theorem can be applied to derive a GCD-type result, which we state below:
\begin{theorem}\label{MainThmgcd}
Let $D_1, \ldots,D_{n+1}$ be effective  divisors intersecting properly on a complex projective variety $X$ of dimension $n$. Suppose that there exist positive integers $a_1,\ldots, a_{n+1}$ such that $a_1D_1, \ldots, a_{n+1}D_{n+1}$ are all numerically equivalent to an ample divisor $D$.  
Suppose that for some index set $I_0 \subset \{1,\ldots,n+1\}$ with $|I_0|=n$ such that $\bigcap_{i\in I_0}D_i$ contains more than one point. 
Then, for  each $\varepsilon>0$,   there exists a proper  Zariski-closed subset $Z\subset X$ such that for any non-constant holomorphic map $f:\mathbb{C}\to X\setminus{\bigcup_{i=1}^{n+1} } D_i$ such that  $f(\mathbb{C})\not\subset Z$,  we have 
\begin{align*}
T_{ \bigcap_{i \in I_0}a_iD_i, f}(r) 
        \le \varepsilon  T_{D,f}(r). 
\end{align*}
\end{theorem}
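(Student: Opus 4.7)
The plan is to derive the theorem from Theorem~\ref{trungeneral} applied to a carefully chosen collection $\mathcal M$ of pairs $(I, Y)$, exploiting that the support of $W_0 := \bigcap_{i \in I_0} a_i D_i$ contains $s \geq 2$ distinct points. After relabeling, assume $I_0 = \{1, \ldots, n\}$ and let $Q_1, \ldots, Q_s$ denote the support of $W_0$. Since $D_1, \ldots, D_{n+1}$ intersect properly on a variety of dimension $n$, the scheme $\bigcap_{i=1}^{n+1} D_i$ is empty, so each $Q_j$ avoids $D_{n+1}$. For each $j$, let $Y_j$ denote the local component at $Q_j$ of the intersection $\bigcap_{i \in I_0} D_i$; its ideal in $\mathcal O_{X, Q_j}$ is generated by local equations $(f_1, \ldots, f_n)$ of $D_1, \ldots, D_n$, which form a regular sequence.

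The first step is to apply Theorem~\ref{trungeneral} with $\mathscr L = D$ and $\mathcal M = \{(I_0 \setminus \{k\}, Y_j) : k \in I_0,\ j = 1, \ldots, s\}$; each pair is admissible because any $(n-1)$-element subset of $\{f_1, \ldots, f_n\}$ is itself a regular sequence that extends, by the remaining generator, to one defining $Y_j$. Using $\beta(D, D_i) = a_i/(n+1)$ from asymptotic Riemann--Roch (valid since $a_i D_i \equiv D$) together with $m_{f, D_i}(r) = T_{f, D_i}(r) + O(1) = T_{f, D}(r)/a_i + O(1)$ (because $f$ misses every $D_i$), the $\sum_i \beta(D, D_i)\lambda_{D_i}$ part integrates to $\tfrac{n}{n+1}\,T_{f, D}(r)$. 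For $z$ near a fixed $Q_{j_0}$, the maximum in the integrand is attained at $j = j_0$ and at the index $k^*(z) \in I_0$ minimizing $\lambda_{D_k}(f(z))$: for this $k^*$ one has $\lambda_{D_{k^*}}(f(z)) = \lambda_{Y_{j_0}}(f(z))$, so the correction $-\beta(D, D_{k^*})(\lambda_{D_{k^*}} - \lambda_{Y_{j_0}})$ vanishes. The pairwise disjoint supports of the $Y_j$ let the contributions from different $Q_j$ decouple, yielding
\[
\tfrac{n}{n+1}\,T_{f, D}(r) \;+\; \sum_{j=1}^{s} \big(\beta(D, Y_j) - B_0\big)\, m_{f, Y_j}(r) \ \le_{\exc}\ (1 + \varepsilon)\,T_{f, D}(r),
\]
where $B_0 = \sum_{i \in I_0} \beta(D, D_i)$. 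The inclusion $(f_1^{a_1}, \ldots, f_n^{a_n}) \supset (f_1, \ldots, f_n)^{\max_i a_i}$ then provides $\lambda_{(W_0)_{Q_j}} \le (\max_i a_i)\, \lambda_{Y_j}$ near $Q_j$, converting a bound on $\sum_j m_{f, Y_j}(r)$ into one on $m_{f, W_0}(r)$.

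The main obstacle is upgrading the conclusion to one with an \emph{arbitrarily small} coefficient $\varepsilon$, not merely a fixed geometric constant: since $\beta(D, Y_j) \le \beta(D, Q_j)$ is bounded above, the direct argument above only yields $m_{f, W_0}(r) \le C\, T_{f, D}(r)$ for some fixed $C$. The resolution is to enlarge $\mathcal M$ to include refined pairs of the form $(I_0 \setminus \{k\}, Y_j^{(k)})$ with $Y_j^{(k)} := \bigl(\bigcap_{i \in I_0 \setminus \{k\}} D_i \cap a_k D_k\bigr)_{Q_j}$, whose local Weil functions jointly satisfy $\lambda_{(W_0)_{Q_j}} \le (\max_i a_i)\,\max_k \lambda_{Y_j^{(k)}} + O(1)$. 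With this enlarged family, the integrated left-hand side matches $T_{f, D}(r)$ up to a term proportional to $m_{f, W_0}(r)$, and the $\varepsilon$-slack on the right forces $m_{f, W_0}(r) \le \varepsilon\, T_{f, D}(r)$. This refinement parallels the surface-case argument of \cite{HLX}, and its careful execution in the higher-dimensional setting is the core technical step.
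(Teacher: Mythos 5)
Your first step — applying Theorem~\ref{trungeneral} with $\mathcal M$ consisting of the pairs $(I_0\setminus\{k\},Y_j)$ and rewriting the integrand so that the omitted index $k^*$ is the one with smallest Weil function, hence $\lambda_{D_{k^*}}=\lambda_{Y_{j_0}}+O(1)$ — is sound, and the bookkeeping $\beta(D,D_i)\,m_{f,D_i}(r)=\tfrac{1}{n+1}T_{D,f}(r)+O(1)$ is correct. You are also right that this alone only yields $m_{f,W_0}(r)\le C\,T_{D,f}(r)$ for a fixed geometric constant $C$, because the integrated left-hand side is at most $\tfrac{n}{n+1}T_{D,f}(r)+\sum_j(\beta(D,Y_j)-B_0)m_{f,Y_j}(r)$ while the right-hand side is $(1+\varepsilon)T_{D,f}(r)$, leaving a fixed slack of order $\tfrac{1}{n+1}T_{D,f}(r)$.

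However, your proposed repair does not close this gap, and it cannot: all the enlarged pairs $(I_0\setminus\{k\},Y_j^{(k)})$ still involve only the divisors $D_i$ with $i\in I_0$. Since the subschemes $Y$ in Theorem~\ref{trungeneral} are zero-dimensional, their $\beta$-constants are bounded, so the $\lambda_Y$ terms can never contribute anything on the order of $\tfrac{1}{n+1}T_{D,f}(r)$ when $f$ spends most of its time away from $\bigcap_{i\in I_0}D_i$. (Note also that replacing $Y_j$ by the larger subscheme $Y_j^{(k)}$ \emph{decreases} $\beta(D,\,\cdot\,)$, so it moves the coefficient $\beta(D,Y)-\sum_{i\in I}\beta(D,D_i)$ in the wrong direction.) The claim ``the integrated left-hand side matches $T_{f,D}(r)$ up to a term proportional to $m_{f,W_0}(r)$'' is therefore not substantiated and appears to be false.

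The missing idea, which is the crux of the paper's argument, is to bring the divisor $D_{n+1}$ into play. Since the $n+1$ divisors intersect properly on an $n$-dimensional variety, one has $\bigcap_{i=1}^{n+1}D_i=\emptyset$, so at each point of $X$ the smallest of the $n+1$ Weil functions is $O(1)$. Ordering $\lambda_{D_{1_z}}(f(z))\ge\cdots\ge\lambda_{D_{(n+1)_z}}(f(z))$ and writing $I_z=\{1_z,\dots,n_z\}$, one gets
\begin{equation*}
\sum_{i=1}^{n+1}\lambda_{D_i}(f(z))
  =\sum_{i=1}^{n-1}\lambda_{D_{i_z}}(f(z))+\lambda_{\bigcap_{i\in I_z}D_i}(f(z))+O(1).
\end{equation*}
After adding $\gamma\,\lambda_{\bigcap_{i\in I_0}D_i}$ to both sides (with $\gamma=\min_Q\{(n+1)\beta(D,(\bigcap_{i\in I_0}D_i)_Q)-n\}>0$, positivity being exactly where the hypothesis that $\bigcap_{i\in I_0}D_i$ has more than one point is used), the right-hand side is bounded above by $(n+1)$ times the integrand of Theorem~\ref{trungeneral}. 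Integrating and applying the theorem then gives
\begin{equation*}
\sum_{i=1}^{n+1}m_f(D_i,r)+\gamma\,m_f\Big(\bigcap_{i\in I_0}D_i,r\Big)\le_{\operatorname{exc}}(n+1+\varepsilon)\,T_{D,f}(r).
\end{equation*}
Because $f$ omits each $D_i$, $\sum_{i=1}^{n+1}m_f(D_i,r)=(n+1)T_{D,f}(r)+O(1)$ exactly matches the $(n+1)$ on the right, so the only remaining slack is $O(\varepsilon)T_{D,f}(r)$, forcing $T_{f,\bigcap_{i\in I_0}a_iD_i}(r)\le\tfrac{2\varepsilon}{\gamma}T_{D,f}(r)$. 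It is precisely the term $m_f(D_{n+1},r)\approx T_{D,f}(r)$, absorbed into the left-hand side ``for free'' via the displayed identity, that turns a fixed-constant bound into an arbitrarily small one.
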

We refer the reader to \cite{HLX} for an overview of various GCD estimates, including results of Levin \cite{LevinGCD}, Levin and the first author \cite{LW}, the first author and Yasufuku \cite{WY}, and Huang and Levin \cite{HL}.

The proof of Theorem~\ref{trungeneral} builds on the work of \cite{HLX}, but with a different choice of filtrations.  To illustrate the core idea behind this approach, let us consider the case where \( X \) is a surface and \( P \) is a point lying in the intersection \( D_1 \cap D_2 \) of two effective divisors. Let \( \mathcal{L} \) be a big line sheaf on \( X \). In this setting, \cite{HLX} constructs filtrations on the spaces \( H^0(X, \mathcal{L}^N \otimes \mathcal{O}(D_i)) \), for \( i = 1, 2 \), in such a way that the resulting sections vanish to high order along \( D_i \). A separate filtration is applied to \( H^0(X, \mathcal{L}^N \otimes {\mathscr I}_{Y}) \), where \( {\mathscr I}_{Y} \) is the ideal sheaf of \( Y = D_1 \cap D_2 \), to control vanishing along the intersection.
In our generalization to higher dimensions, the filtrations on  \( H^0(X, \mathcal{L}^N \otimes \mathcal{O}(D_i)) \) are replaced with the moving filtration introduced by Autissier in \cite{Aut11}. 

The complete proof of Theorem~\ref{trungeneral}, including the necessary modifications to several technical lemmas from \cite{HLX}, will be presented in Section~\ref{ProofMain}. In Section~\ref{Nevanlinna}, we review basic notions and results from Nevanlinna theory for divisors and closed subschemes. Section~\ref{hyperbolic} is devoted to formulating and proving a refined version of Theorem~\ref{mainTheorem2}, along with examples illustrating the necessity of the conditions on the $\beta$ constant and local intersection multiplicities.  The proof of Theorem~\ref{MainThmgcd} will be given in Section~\ref{GCD}.  Finally, we remark that our results extend analogously to the number field setting; this will be addressed in a forthcoming sequel focusing on arithmetic applications.

 \section{Preliminary in Nevanlinna Theory}\label{Nevanlinna} 
  We first recall some definitions in Nevanlinna theory.
  Let $D$ be an effective Cartier divisor on a complex variety $X$.  Let  $s=1_D$ be a canonical section of $\mathscr O(D)$ (i.e. a global section for which $(s)=D$).  Choose a smooth metric $|\cdot|$
on $\mathscr O(D)$.  The associated Weil function $ \lambda_D: X(\mathbb C)\setminus\Supp D$ is given by
\begin{equation*} 
  \lambda_D(x) := -\log|s(x)|.
\end{equation*}
It is linear in $D$ (over a suitable domain), so by linearity and continuity
it can be extended to a definition of $\lambda_D$ for a general Cartier divisor
$D$ on $X$.  The definition of Weil functions extends to closed subschemes.  In particular, we may write $Y$ as an intersection of divisors, i.e. $Y=\cap_{i=1}^{\ell} D_i$, then $\lambda_Y=\min_{i=1}^{\ell}\{\lambda_{D_i}\} +O(1).$   We refer to \cite{Yamanoi} for the construction and properties. 

Let $f:\mathbb{C}\to X$ be a holomorphic map whose image is not contained in the support of closed subscheme  $Y$ on $X$. 
 The {\it proximity function} of $f$ with respect to $Y$ is defined by $$m_f(Y,r)=\int_0^{2\pi}\lambda_Y(f (re^{i\theta}))\frac{d\theta}{2\pi}.$$
 Let 
$$
{\rm ord}_zf^*Y:=\min\{{\rm ord}_zf^*D_1,\hdots,{\rm ord}_zf^*D_{\ell}\}.
$$
When $f(\mathbb{C})\subset D_i$, we set ${\rm ord}_zf^*D_{i}=\infty$.
The  {\it counting function}   is defined  by
$$
N_f(Y,r) = \int_1^r \left(\sum_{z\in B(t)}{\rm ord}_zf^*Y \right) \frac{dt}{t},
$$
where $B(t)=\{z\in \mathbb C\, : |z|<t\}.$   
 The characteristic function relative to $Y$ is defined, up to $O(1)$, as 
\begin{align}\label{FMT}
T_{Y,f}(r):=m_f(Y,r)+N_f(Y,r).
\end{align}

We recall the following standard result, in the spirit of the Second Main Theorem, from \cite[Theorem 7.3]{RTW21} (see also \cite[Theorem A.7.1.1]{Rubook}).
\begin{theorem}
\label{gsmt2}
    Let $X$ be a complex projective variety and let $D$ be a Cartier divisor on $X$, let $V$ be a nonzero $d$-dimensional linear subspace of $H^0(X,\mathcal{O}(D))$, and let $s_1,\dots,s_q$ be nonzero elements of $V$. 
 Let $\Phi=(\phi_1,\hdots,\phi_{d}):X\dashrightarrow \mathbb{P}^{d-1}$ be the rational map associated to the linear system $V$. For each $j=1,\dots,q$, let $D_j$ be the Cartier divisor $(s_j)$. 
     Let $\Psi=(\psi_1,\hdots,\psi_{d}):\mathbb C\to   \mathbb{P}^{d-1}$ be a reduced form of $\Phi\circ f$, i.e.  $\Psi=\Phi\circ f$  and $\psi_1,\hdots,\psi_{d}$ are entire functions without common zeros. 
    Then for any $\varepsilon>0$, there exists a proper  Zariski-closed subset such that for any nonconstant holomorphic map $f:\mathbb{C}\to X$ such that  $f(\mathbb{C})\not\subset Z$, the following holds:
       $$\int_0^{2\pi} \max_J \sum_{j\in J} \lambda_{D_j}(f(re^{i\theta}))\frac{d\theta}{2\pi} \leq_{\operatorname{exc}} (\operatorname{dim}V+\varepsilon)T_{D,f}(r),$$ where $J$ ranges over all subsets of $\{1,\dots,q\}$ such that the sections $(s_j)_{j\in J}$ are linearly independent.
       \end{theorem}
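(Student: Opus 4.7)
The plan is to reduce the statement to a form of Cartan's Second Main Theorem on $\mathbb{P}^{d-1}$ via the rational map $\Phi$ induced by the linear system $V$. First, I would resolve the indeterminacy of $\Phi$ by a proper birational modification $\pi:\tilde X \to X$, so that $\tilde\Phi := \Phi\circ\pi$ is an honest morphism $\tilde X \to \mathbb{P}^{d-1}$; the indeterminacy locus of $\Phi$ together with the base locus of $V$ will be absorbed into the exceptional set $Z$ that the theorem allows us to remove. Each section $s_j$ corresponds to a hyperplane $H_j\subset\mathbb{P}^{d-1}$ whose defining linear form is $s_j$ read off in the coordinates $(\phi_1,\ldots,\phi_d)$. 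Since $f(\mathbb{C})\not\subset Z$, the composition $\Phi\circ f$ is well-defined as a meromorphic map $\mathbb{C}\to \mathbb{P}^{d-1}$, and the reduced form $\Psi=(\psi_1,\ldots,\psi_d)$ is its canonical holomorphic representative.

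Next, I would transport the Nevanlinna-theoretic quantities from $X$ to $\mathbb{P}^{d-1}$. Functoriality of Weil functions gives, away from the base locus, $\lambda_{D_j}(f(z)) = \lambda_{H_j}(\Psi(z)) + O(1)$, and because $\tilde\Phi^*\mathcal{O}_{\mathbb{P}^{d-1}}(1)$ coincides with $\pi^*\mathcal{O}_X(D)$ modulo the fixed part of the linear system $V$, we also have $T_{\mathcal{O}(1),\Psi}(r)\leq T_{D,f}(r)+O(1)$. With these identifications the inequality to be proved becomes equivalent to
\begin{equation*}
\int_0^{2\pi}\max_J\sum_{j\in J}\lambda_{H_j}(\Psi(re^{i\theta}))\,\frac{d\theta}{2\pi}\leq_{\exc}(d+\varepsilon)\,T_{\mathcal{O}(1),\Psi}(r),
\end{equation*}
where $J$ ranges over subsets of $\{1,\ldots,q\}$ for which the linear forms $(s_j)_{j\in J}$ are linearly independent.

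The heart of the argument is then a direct appeal to Cartan's Second Main Theorem in its ``general linear forms'' formulation (the version used throughout the Ru--Vojta framework, which allows arbitrary, not necessarily distinct, hyperplanes and yields the max-over-$J$ shape on the left-hand side). The main obstacle is the possible linear degeneracy of $\Psi$: if $\Psi(\mathbb{C})$ is contained in a proper linear subspace $L\subset\mathbb{P}^{d-1}$, then Cartan's theorem must be applied on $L$ instead of on the full ambient space, the effective dimension drops, and additional linear relations may appear among the restrictions of the $s_j$'s. I would handle this by induction on $\dim L$, at each step adjoining the Zariski closure of $\pi(\tilde\Phi^{-1}(L))$ to $Z$. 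Because only finitely many such $L$ need to be considered -- namely the linear spans of subsets of the finite collection $\{H_j\}$ intersected with the linear hull of $\Phi(X)$ -- the resulting $Z$ remains a proper Zariski-closed subset of $X$, and for every $f$ with $f(\mathbb{C})\not\subset Z$ the Cartan bound with the stated choice of linearly independent $J$ yields the required inequality after unwinding the identifications from the second paragraph.
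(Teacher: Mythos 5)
The paper does not actually prove this statement: it is quoted verbatim from \cite[Theorem~7.3]{RTW21} (see also \cite[Theorem~A.7.1.1]{Rubook}) and used as a black box, so there is no internal argument to compare yours against. Your overall strategy---resolve the indeterminacy of $\Phi$, transport $\lambda_{D_j}$ and $T_{D,f}$ to $\mathbb{P}^{d-1}$, and invoke a Cartan-type Second Main Theorem for the hyperplanes $H_j$---is indeed the route taken in those references, and the handling of the base locus (absorbing the fixed part $B$ of $|V|$ via $d\,m_f(B,r)\le d\,T_{B,f}(r)$ against the saving $T_\Psi\le T_{D,f}-T_{B,f}$) can be made to work, though you gloss over it.

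The genuine gap is in the treatment of linear degeneracy, which is in fact the whole point of the theorem. Your plan is to adjoin $\overline{\pi(\tilde\Phi^{-1}(L))}$ to $Z$ for ``finitely many'' linear subspaces $L$, namely those spanned by subschemes cut out by the $H_j$'s. This does not work: for \emph{every} proper linear subspace $W\subsetneq\mathbb{P}^{d-1}$ the restriction map $V\to H^0(W,\mathcal{O}(1))$ has a nonzero kernel of dimension $d-1-\dim W$, so any index set $J$ that is linearly independent in $V$ with $|J|>\dim W+1$ becomes linearly dependent upon restriction to $W$. Consequently the degeneracy you worry about occurs for \emph{all} proper $W$, not just finitely many special ones, and the set of such $W$ cannot be pushed into a proper Zariski-closed $Z$. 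Moreover, when you drop to Cartan on $W$, the ``leftover'' terms $\sum_{j\in J\setminus J_0}\lambda_{H_j}(\Psi)$ with $|J\setminus J_0|\le d-\dim W-1$ are not controlled pointwise inside $\int\max_J(\cdot)$; the naive bound $\int\max_j\lambda_{H_j}(\Psi)\frac{d\theta}{2\pi}\le T_\Psi(r)+O(1)$ is false (e.g., $\Psi(z)=[e^z:e^{-z}:1]$ with coordinate hyperplanes gives $\int\max_j\lambda_{H_j}=2T_\Psi+O(1)$). Establishing the inequality with a uniform, $f$-independent exceptional set $Z$ when $\Psi$ may degenerate in arbitrary directions is precisely the nontrivial content of the Ru--Vojta general theorem, and your induction as written does not supply it. Citing the ``max-over-$J$'' Cartan with uniform exceptional set as a known fact would make the reduction correct, but that citation is essentially the theorem itself for $X=\mathbb{P}^{d-1}$, so the argument would be circular unless you spell out the combinatorial/Nochka-style core that the references actually provide.
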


For the convenience of the reader, we state the following general theorem of Ru-Vojta in complex case.
\begin{theorem}\label{generalRV}
 Let $X$ be a complex projective variety of dimension $n$,  let $\mathscr L$ be a big line sheaf on $X$, and let
 $D_1,\cdots,D_q$  be effective Cartier divisors on  $X$ that intersect properly.
 Then, for every $\varepsilon>0$,  there exists a proper  Zariski-closed subset $Z\subset X$ such that for any non-constant holomorphic map $f:\mathbb{C}\to X$ with  $f(\mathbb{C})\not\subset Z$,  the following  inequality  holds:
\begin{align}\label{mainineqRV}
 \sum_{i=1}^q \beta(\mathscr L, D_i)m_f(D_i,r)
 \leq_{\operatorname{exc}} (1+\varepsilon)T_{\mathscr L,f}(r).
  \end{align}
\end{theorem}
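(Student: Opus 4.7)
The plan is to apply the general Second Main Theorem (Theorem \ref{gsmt2}) to a carefully chosen pool of sections of $H^0(X,\mathscr{L}^N)$ for $N\gg 0$, following the filtration strategy of Autissier that underlies the Ru--Vojta method. First, fix $\varepsilon>0$. Since $\mathscr{L}$ is big, $h^0(X,\mathscr{L}^N)\sim c\,N^n$ for $N$ large, and by Definition \ref{def_beta} we may choose $N$ so that
\begin{equation*}
\sum_{m\ge 1} h^0\bigl(X,\mathscr{L}^N\otimes\mathscr{I}_{D_i}^{m}\bigr) \;\ge\; \bigl(\beta(\mathscr{L},D_i)-\varepsilon\bigr)\,N\,h^0(X,\mathscr{L}^N)
\end{equation*}
for all $i=1,\dots,q$.

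Since $D_1,\dots,D_q$ intersect properly on an $n$-dimensional variety, at most $n$ of them can pass through any given point. For each subset $I\subset\{1,\dots,q\}$ with $|I|\le n$ and $\bigcap_{i\in I}D_i\neq\emptyset$, I would invoke Autissier's simultaneous filtration lemma to produce a basis $\mathcal{B}_I$ of $H^0(X,\mathscr{L}^N)$ adapted to the nested system $\{H^0(X,\mathscr{L}^N\otimes\mathscr{I}_{D_i}^{m})\}_{i\in I,\,m\ge 0}$. The regular sequence hypothesis required to run the lemma is ensured by proper intersection of the $D_i$ with $i\in I$. The key output is that for every $i\in I$,
\begin{equation*}
\sum_{s\in\mathcal{B}_I} \operatorname{ord}_{D_i}(s) \;\ge\; \bigl(\beta(\mathscr{L},D_i)-\varepsilon\bigr)\,N\,h^0(X,\mathscr{L}^N).
\end{equation*}

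Next, feed the union $\bigcup_I \mathcal{B}_I$ into Theorem \ref{gsmt2}. Away from $\bigcap_{i\in I}D_i$ the Weil functions of the $D_j$ with $j\notin I$ remain bounded, so on the locus where $f(re^{i\theta})$ lies near $\bigcap_{i\in I}D_i$ only the sections in $\mathcal{B}_I$ contribute to the maximum. Using the decomposition $\lambda_{(s)}=\sum_{i\in I}\operatorname{ord}_{D_i}(s)\,\lambda_{D_i}+O(1)$, the bound above converts the vanishing sum into $\sum_{i\in I}\beta(\mathscr{L},D_i)\,\lambda_{D_i}(f(re^{i\theta}))$ up to error terms absorbed by $\varepsilon T_{\mathscr{L},f}(r)$. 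Integrating over $\theta$, applying Theorem \ref{gsmt2} with $d=h^0(X,\mathscr{L}^N)$, and dividing by $N$ produces (\ref{mainineqRV}), after letting $N\to\infty$ to clean up the error in $\beta$.

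The main obstacle is the construction and bookkeeping for the simultaneously adapted bases $\mathcal{B}_I$: one must verify both that the total vanishing achieves the $\beta$-bound and that the `moving' choice of $I$ at different points of $f(\mathbb{C})$ is correctly handled by the maximum over linearly independent subsets in Theorem \ref{gsmt2}. This is exactly the combinatorial core of Autissier's argument, as refined by Ru--Vojta, and is where the proper intersection hypothesis is genuinely used.
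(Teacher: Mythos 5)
The paper itself does not prove Theorem~\ref{generalRV}; it quotes it from Ru--Vojta \cite{RV20}, and the same machinery (Lemma~\ref{aut_main_lemma}, which is \cite[Lemma~6.8]{RV20}) is then reused in the proof of Theorem~\ref{trungeneral}. Your sketch is pointed in the right general direction (filter $H^0(X,\mathscr L^N)$, pass a pool of bases through Theorem~\ref{gsmt2}), but the central claim is incorrect and the error is exactly the one Autissier's construction is designed to route around.

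You assert that for each $I$ with $|I|\le n$ one can find a single basis $\mathcal B_I$ of $H^0(X,\mathscr L^N)$ that is \emph{simultaneously} adapted to the vanishing filtrations $\{H^0(X,\mathscr L^N\otimes\mathscr I_{D_i}^m)\}_{m\ge 0}$ for every $i\in I$, and then deduce $\sum_{s\in\mathcal B_I}\operatorname{ord}_{D_i}(s)\ge(\beta(\mathscr L,D_i)-\varepsilon)Nh^0(\mathscr L^N)$ for all $i\in I$ at once. For $|I|\ge 3$ no such common adapted basis exists in general: a finite-dimensional vector space admits a basis adapted to any two filtrations, but not to three or more, and nothing about proper intersection or regular sequences removes this obstruction. (For $n=2$ the two-filtration trick is available and is essentially what \cite{HLX} exploits; for general $n$ it is not.) This is precisely why \cite{Aut11} and \cite{RV20} do not work with the $|I|$ vanishing filtrations directly: they introduce the weighted filtrations $\mathcal F(\sigma;\mathbf a)_x$ of \eqref{filtrationof_x}, one single filtration for each weight vector $\mathbf a\in\bigtriangleup_\sigma$, each of which admits an adapted basis $\mathcal B_{\sigma;\mathbf a}$, and the content of Lemma~\ref{aut_main_lemma} is the lower bound $\bigvee_{\sigma,\mathbf a}(\mathcal B_{\sigma;\mathbf a})\ge\frac{b}{b+n}\bigl(\min_i\frac{\sum_m h^0(\mathscr L^N(-mD_i))}{\beta_i}\bigr)\sum_i\beta_i D_i$ for the supremum over all these bases. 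It is the supremum over the moving weight $\mathbf a$, not a single magically adapted basis, that recovers $\sum_i\beta_i D_i$ on the nose; one then applies Theorem~\ref{gsmt2} to the (finite) union of the $\mathcal B_{\sigma;\mathbf a}$ exactly as you propose to do with your $\mathcal B_I$.

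Two further small points. The identity $\lambda_{(s)}=\sum_{i\in I}\operatorname{ord}_{D_i}(s)\lambda_{D_i}+O(1)$ should be an inequality $\lambda_{(s)}\ge\sum_{i\in I}\operatorname{ord}_{D_i}(s)\lambda_{D_i}+O(1)$, since $(s)$ also vanishes along components outside $\{D_i:i\in I\}$; this direction is what you need and it is fine. Also, "at most $n$ of the divisors pass through any point" is the correct geometric consequence of proper intersection, and it is what makes the restriction to $|\sigma|\le n$ in $\Sigma$ harmless, so that part of the outline is sound. If you replace the nonexistent simultaneous-adaptation step with Autissier's moving filtration and Lemma~\ref{aut_main_lemma}, the rest of your outline assembles into the Ru--Vojta proof.
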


\baselineskip=16truept \maketitle \pagestyle{myheadings}
\section{Proof of Theorem \ref{trungeneral}}\label{ProofMain}
\subsection{Some Technical Lemmas}
 
 We begin by recalling some technical lemma from \cite{HLX} with minor modifications. 
\begin{definition}
Let $I$ and $J$ be ideals of a ring $R$.  The quotient ideal $(I:J)$ is defined by $(I:J)=\{r\in R\,|\, rJ\subset I\}$.
\end{definition}
\begin{lemma}[cf. {\cite[Lemma 2.9]{HLX}}]\label{ideallemma}
    Let $I=(f_1,\ldots, f_m)$ be an ideal in a ring $R$, generated by a regular sequence $f_1,\ldots, f_m$. 
Let  $A=\{n_1,\ldots, n_m\} $ be an $m$-tuples of nonnegative integers, and let $|A|=n_1+\cdots+n_m$.  Define $\mathbf{f}^A:=f_1^{n_1}\cdots f_m^{n_m}$.  Then for any integer $n\ge |A|$, we have
\begin{align*}
(I^n:(\mathbf{f}^A))=I^{n-|A|}.
\end{align*}

\end{lemma}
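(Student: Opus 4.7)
The plan is to establish the two inclusions separately. The inclusion $I^{n-|A|}\subseteq (I^n:(\mathbf{f}^A))$ is immediate: for any $g\in I^{n-|A|}$, one has $g\cdot \mathbf{f}^A\in I^{n-|A|}\cdot I^{|A|}\subseteq I^n$, since $\mathbf{f}^A=f_1^{n_1}\cdots f_m^{n_m}\in I^{|A|}$. So I would focus the argument on the reverse inclusion $(I^n:(\mathbf{f}^A))\subseteq I^{n-|A|}$, which is the substantive content.

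My approach is to reduce to the single-exponent case. Specifically, I would prove the key claim that $(I^k:f_i)=I^{k-1}$ for every $i\in\{1,\dots,m\}$ and every $k\ge 1$. Once this is available, the general colon is obtained by peeling off the factors of $\mathbf{f}^A$ one at a time: writing $g\cdot \mathbf{f}^A\in I^n$ as $(g\,f_1^{n_1}\cdots f_m^{n_m-1})\,f_m \in I^n$ gives $g\,f_1^{n_1}\cdots f_m^{n_m-1}\in I^{n-1}$ by the key claim, and iterating a total of $|A|$ times delivers $g\in I^{n-|A|}$.

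The key claim is where the regular-sequence hypothesis enters, and my plan here is to invoke the classical identification of the associated graded ring (e.g., Matsumura, \emph{Commutative Ring Theory}, Thm.~16.2): since $f_1,\dots,f_m$ is a regular sequence generating $I$, the natural surjection
\[
(R/I)[x_1,\dots,x_m]\;\longrightarrow\; \operatorname{gr}_I(R)=\bigoplus_{k\ge 0}I^k/I^{k+1},\qquad x_i\mapsto \bar f_i\in I/I^2,
\]
is an isomorphism of graded $R/I$-algebras. Since each variable $x_i$ is a nonzerodivisor in any polynomial ring, multiplication by $\bar f_i$ induces an injective map $I^{k-1}/I^k\to I^k/I^{k+1}$ for every $k\ge 1$.

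With this tool, to prove the key claim I would argue by contradiction: suppose $gf_i\in I^k$ but $g\notin I^{k-1}$. Since $g\in I^0=R$, the finite set $\{\ell\in\{0,\dots,k-2\}:g\in I^\ell\}$ is nonempty, so it attains a maximum $\ell^*\le k-2$, giving $g\in I^{\ell^*}\setminus I^{\ell^*+1}$. The class $\bar g\in I^{\ell^*}/I^{\ell^*+1}$ is nonzero, so by the injectivity above the product $\bar g\cdot \bar f_i$ is nonzero in $I^{\ell^*+1}/I^{\ell^*+2}$, i.e.\ $gf_i\notin I^{\ell^*+2}$; but $\ell^*+2\le k$ forces $gf_i\in I^k\subseteq I^{\ell^*+2}$, a contradiction. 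The main obstacle is thus isolating and proving the single-factor statement: without the regular-sequence hypothesis there is no reason for $(I^k:f_i)$ to be this small, and the polynomial-ring identification of $\operatorname{gr}_I(R)$ is exactly the machine that converts regularity into the required injectivity. Once it is in hand, the induction back up to $\mathbf{f}^A$ is routine.
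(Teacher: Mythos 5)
Your proof is correct and rests on the same key fact as the paper's: since $f_1,\dots,f_m$ is a regular sequence, $\operatorname{gr}_I(R)\cong (R/I)[x_1,\dots,x_m]$, whence the relevant monomial classes are nonzerodivisors and multiplication lifts to injective maps between graded pieces. The only difference is organizational — you reduce to the single-factor colon $(I^k:f_i)=I^{k-1}$ and peel off one factor at a time, while the paper applies the nonzerodivisor argument directly to $\mathbf{x}^A$; this is essentially the same approach, and your version is if anything slightly more careful about the existence of the maximal exponent $\ell^*$ (though you might note explicitly that the $k=1$ case of your key claim is vacuous).
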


\begin{proof}
Since $\mathbf{f}^A \in I^{|A|}$, the inclusion $I^{n-|A|} \subset (I^n:(\mathbf{f}^A))$ is clear. 

To prove the reverse inclusion, suppose that $a\mathbf{f}^A   \in I^n$ for some nonzero $a\in R$. Consider the associated graded ring $G_I(R) := \bigoplus_{r \ge 0} I^r / I^{r+1}$. By \cite[Theorem 27]{Mat80}, since $f_1,\ldots, f_m$ forms a regular sequence, the graded ring $G_I(R)$ is isomorphic to the polynomial ring $(R/I)[\mathbf{x}]=(R/I)[x_1,\ldots, x_m]$, where the indeterminates $x_1,\ldots, x_m$ correspond to the images of $f_1,\ldots, f_m$ in the first graded piece of $G_I(R)$.
Suppose $a \in I^\ell $  for some nonnegative integer $\ell$, so that the image $  \bar{a} \in I^\ell / I^{\ell+1}$ is nonzero. Since each $x_j$ is not a zero divisor in $(R/I)[x_1,\ldots, x_m]$,  the monomial $\mathbf{x}^A:=x_1^{n_1}\cdots x_m^{n_m} $ is also not a zero divisor. It follows that
$0 \neq \overline{a \mathbf{f}^A} \in I^{\ell + | A|} / I^{\ell + | A| + 1}$. Therefore $a \mathbf{f}^A \notin  I^{\ell + |A| + 1}$.  Since we are assuming $a\mathbf{f}^A   \in I^n$, it must be  $\ell + |A| + 1 \ge n+1$, i.e. $\ell \ge n - | A|$.   Thus, $a\in I^\ell \subset I^{n - | A|},$ and we conclude that $(I^n:(\mathbf{f}^A))\subset I^{n-|A|}.$
\end{proof} 
\begin{definition}
Let $D$ be an  effective Cartier divisor on $X$,  and $Y$ be a closed subscheme  of $X$.  Define $n_Y(D)$ to be the largest nonnegative integer such that the scheme-theoretic inclusion $n_Y(D) Y\subset D$ holds.
\end{definition}

\begin{lemma}[cf. {\cite[Lemma 2.17]{HLX}}]\label{orderlemma}
Let $D,D'$ be effective Cartier divisors on a projective variety $X$ of dimension $\geq 2$.
Let $P$ be a (reduced) point in $D$  and let $Y=\alpha P$ for some positive integer $\alpha$. Suppose there exists a regular sequence $f_1,\ldots, f_m$ in the local ring  $\mathcal{O}_{X,P}$  such that $D$ is locally defined by  $\mathbf{f}^A:=f_1^{n_1}\cdots f_m^{n_m}$, where $n_i$ are nonnegative integers and $P$ is defined by the ideal $(f_1,\ldots,f_m)$. Then the following hold:
\begin{enumerate}
\item[{\rm(i)}] $n_P(D + D')=n_P(D) + n_P(D')$ and  $n_P(D) = |A|.$
\item[{\rm(ii)}]  $-2<n_Y(D + D')- n_Y(D) - n_Y(D')<2 $, and $ n_Y(D) =\left\lfloor \frac{|A|}{\alpha} \right\rfloor.$
\end{enumerate}
\end{lemma}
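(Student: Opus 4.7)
The plan is to translate the scheme-theoretic containments $nY\subset D$ into ideal-theoretic statements about the local ring $R=\mathcal{O}_{X,P}$, and then exploit the fact that, since $\frakm=(f_1,\ldots,f_m)$ is generated by a regular sequence, $R$ is a regular local ring whose associated graded ring $\mathrm{gr}_{\frakm}R$ is the polynomial ring $(R/\frakm)[x_1,\ldots,x_m]$ over the residue field. This is exactly the input already used for Lemma 2.9.

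First I would set up the dictionary. The subscheme $nY=n\alpha P$ is cut out by $\frakm^{n\alpha}$, so the scheme-theoretic inclusion $nY\subset D$ is equivalent to the local equation of $D$ at $P$ lying in $\frakm^{n\alpha}$. Consequently $n_P(D)$ is precisely the $\frakm$-adic order $\ord_P$ of a local equation of $D$, and more generally $n_Y(D)=\lfloor n_P(D)/\alpha\rfloor$ whenever $Y=\alpha P$. This reduces both (i) and (ii) to statements about $\ord_P$ of local equations, together with the elementary arithmetic of the floor function.

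For (i), in the polynomial ring $\mathrm{gr}_{\frakm}R$ the image of $\mathbf{f}^A$ is the monomial $x_1^{n_1}\cdots x_m^{n_m}$, which has degree $|A|$ and is nonzero; hence $\mathbf{f}^A\in\frakm^{|A|}\setminus\frakm^{|A|+1}$, yielding $\ord_P(\mathbf{f}^A)=|A|$ and thus $n_P(D)=|A|$. For additivity, let $g$ be a local equation for $D'$ at $P$; then $D+D'$ is locally defined by $\mathbf{f}^A g$, and since the polynomial ring $\mathrm{gr}_{\frakm}R$ is a domain, the product of the nonzero leading forms of $\mathbf{f}^A$ and $g$ remains nonzero in the appropriate graded piece. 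This gives $\ord_P(\mathbf{f}^A g)=\ord_P(\mathbf{f}^A)+\ord_P(g)$, i.e.\ $n_P(D+D')=n_P(D)+n_P(D')$.

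For (ii), the explicit formula $n_Y(D)=\lfloor |A|/\alpha\rfloor$ is immediate from the dictionary and the computation $\ord_P(\mathbf{f}^A)=|A|$. The remaining inequality follows from (i) combined with the elementary identity $\lfloor (a+b)/\alpha\rfloor-\lfloor a/\alpha\rfloor-\lfloor b/\alpha\rfloor\in\{0,1\}$, applied to $a=n_P(D)=|A|$ and $b=n_P(D')$; this immediately places the difference $n_Y(D+D')-n_Y(D)-n_Y(D')$ in $\{0,1\}\subset(-2,2)$. There is no serious obstacle: all the hypotheses are packaged precisely to make $\mathrm{gr}_{\frakm}R$ a domain, after which everything reduces to standard bookkeeping with the $\frakm$-adic order and the floor function.
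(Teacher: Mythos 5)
Your proof is correct and rests on the same underlying commutative-algebra fact as the paper's: since $\mathfrak m_P=(f_1,\ldots,f_m)$ is generated by a regular sequence, $\mathrm{gr}_{\mathfrak m}\mathcal O_{X,P}\cong (\mathcal O_{X,P}/\mathfrak m_P)[x_1,\ldots,x_m]$ is an integral domain. The difference is one of packaging: the paper routes part (i) through the colon-ideal Lemma~\ref{ideallemma} (showing $f'\in\mathscr I_P^{\,n-|A|}$ from $\mathbf f^A f'\in\mathscr I_P^{\,n}$ and then bounding the other way), whereas you argue directly that $\mathfrak m$-adic order is additive because $\mathrm{gr}_{\mathfrak m}R$ has no zero divisors. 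The two are logically equivalent here, but your version makes explicit the exact identification $n_P(D)=\ord_{\mathfrak m}$(local equation of $D$) and $n_Y(D)=\lfloor n_P(D)/\alpha\rfloor$ for \emph{any} effective Cartier $D$ near $P$, which makes part (ii) an instance of the elementary identity $\lfloor(a+b)/\alpha\rfloor-\lfloor a/\alpha\rfloor-\lfloor b/\alpha\rfloor\in\{0,1\}$; this is tighter than the bound $-2<\cdot<2$ stated in the lemma, and cleaner than the paper's two-sided sandwiching $\lfloor n_P(D)/\alpha\rfloor\le n_Y(D)\le n_P(D)/\alpha$. Both buy the same conclusion; yours just gets the sharp form along the way.
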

   
\begin{proof}
 Let ${\mathscr I}_P\subset \mathcal{O}_{ X, P}$ be the  ideal sheaf of $P$. Let $n=n_P(D+D')$. Suppose that $D$ and $D'$ are represented by $\mathbf{f}^A$ and ${\tilde f}$ locally at $P$. Then since $nP \subset D+D'$, we have $\mathbf{f}^A{\cdot\tilde f}\in {\mathscr I}_P^n$. By Lemma \ref{ideallemma}, ${\tilde f} \in {\mathscr I}_P^{n-| A|}$. On the other hand, since $\mathbf{f}^A \in {\mathscr I}_P^{|A|}$, if ${\tilde f}\in {\mathscr I}_P^{n-|A|+1}$ then  $\mathbf{f}^A{\cdot\tilde f}\in {\mathscr I}_P^{n+1}$, contradicting the definition of $n$. It follows that 
\begin{align*}
n_P(D')=n-|A|. 
\end{align*}
The equality now follows from noting that $n_P(D)=|A|$.
The second assertion can be derived from (i) and that 
$$
 \lfloor{\frac{n_P(D)}{\alpha}}\rfloor\le n_Y(D)\le  \frac{n_P(D)}{\alpha}.
$$
\end{proof}
 
\subsection{Proof of Theorem \ref{trungeneral}}
We begin with some preparation.
Let $D_1,\dots,D_q$ be divisors  intersecting properly on $X$, $\mathscr L$ be a big line sheaf on $X$, and
$n=\dim X$.   We may assume that $\beta_i\in \mathbb Q$ are chosen slightly smaller than $\beta( {\mathscr L}, D_{i})$ for $1\le i\le q$.
Choose positive integers $N$ and $b$ such that
\begin{equation}\label{aut_choices}
  \left( 1 + \frac nb \right) \max_{1\le i\le q}
      \frac{\beta_i Nh^0(X, \mathscr L^N) }
        {\sum_{m\ge1} h^0(X, \mathscr L^N(-mD_i))}
    < 1 + \epsilon\;.
\end{equation}

Let
$$\Sigma
  = \biggl\{\sigma\subseteq \{1,\dots,q\}
    \bigm| \bigcap_{j\in \sigma}  D_j\ne\emptyset\biggr\}.$$
For $\sigma\in \Sigma$, let
$$\bigtriangleup_{\sigma}
  = \left\{\mathbf a = (a_i)\in \prod_{i\in\sigma}\beta_i^{-1}\mathbb N
    \Bigm| \sum_{i\in\sigma} \beta_ia_i = b \right\}.$$
For $\mathbf a\in\bigtriangleup_{\sigma}$ as above, we construct a filtration of $H^0(X, \mathscr L^N)$ as follows: 
For  $x\in  \mathbb R^+$, 
one defines  
the ideal ${\mathscr I}_{\mathbf a}(x)$ of ${\mathscr O}_X$ by
\begin{equation}\label{def_I_of_x}
  {\mathscr I}_{\mathbf a}(x)
  = \sum_{{\bf b}} {\mathscr O}_X\left(-\sum_{i\in \sigma} b_iD_{i}\right)
\end{equation}
where the sum is taken for all ${\bf b}\in {\mathbb N}^{\#\sigma}$
with $\sum_{i\in \sigma} a_ib_i\ge bx$. Let
\begin{equation}\label{filtrationof_x}{\mathcal F}(\sigma; {\bf a})_x
  = H^0(X, \mathscr L^N \otimes {{\mathscr I}_{\mathbf a}}(x)),
  \end{equation}
which we regard as a subspace of $H^0(X,\mathscr L^N)$.
We note that there are only finitely many ordered pairs
$(\sigma,\mathbf a)$ with $\sigma\in\Sigma$
and $\mathbf a\in\bigtriangleup_\sigma$.  Let $\mathcal B_{\sigma; {\bf a}}$ be a basis of
$H^0(X, \mathscr L^N)$ adapted to the above filtration $\{{\mathcal F}(\sigma; {\bf a})_x\}_{x\in\mathbb R^+}$.

  For a basis ${\mathcal B}$ of $H^0(X, {\mathscr L}^N)$, denote by $(\mathcal B)$  the sum of the divisors $(s)$
for all $s\in\mathcal B$.  
We now state the following main lemma in \cite{RV20}.

\begin{lemma}\label{aut_main_lemma} {\cite[Lemma 6.8]{RV20}}
With the above notations, we have
\begin{equation}\label{aut_main_step}
  \bigvee_{\substack{\sigma\in\Sigma \\ \mathbf a\in\Delta_\sigma}}
      (\mathcal B_{\sigma;\mathbf a})
    \ge \frac b{b+n}\left(\min_{1\le i\le q}
      \frac{\sum_{m=1}^\infty h^0(X,\mathscr L^N(-mD_i))}{\beta_i}\right)
      \sum_{i=1}^q \beta_iD_i\;.
\end{equation}
\end{lemma}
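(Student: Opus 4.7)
The plan is to follow the moving-filtration argument of Ru--Vojta \cite[Lemma~6.8]{RV20}, which in turn generalizes Autissier \cite{Aut11}. Inequality \eqref{aut_main_step} is a statement about effective Cartier divisors on $X$, so I would verify it prime divisor by prime divisor: since the right-hand side is supported on $\bigcup D_i$, I would fix $E=D_{i_0}$ for some $i_0$ and work at a generic point $P$ of $E$. Set $\sigma_0:=\{i:P\in \Supp D_i\}$; by proper intersection $\#\sigma_0\le n$, and when $\sigma_0\ne\emptyset$ one has $\sigma_0\in\Sigma$. The task then reduces to exhibiting an $\mathbf a\in\bigtriangleup_{\sigma_0}$ for which $\ord_{D_{i_0}}\bigl((\mathcal B_{\sigma_0;\mathbf a})\bigr)$ dominates the coefficient of $D_{i_0}$ on the right of \eqref{aut_main_step}.

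For each $\mathbf a\in\bigtriangleup_{\sigma_0}$, I would unwind \eqref{def_I_of_x}: a basis element $s\in\mathcal B_{\sigma_0;\mathbf a}$ at filtration level $x(s)$ is, locally at $P$, a sum of sections each vanishing along some $\sum_{i\in\sigma_0} b_iD_i$ with $\mathbf b\in\mathbb N^{\#\sigma_0}$ satisfying $\sum_i a_ib_i\ge b\,x(s)$. Choosing $\mathbf b$ concentrated in the $j$-th coordinate then yields $\ord_{D_j}(s)\ge \lfloor b\,x(s)/a_j\rfloor$ for each $j\in\sigma_0$. Summing over the basis via the standard identity
\[
\sum_{s\in\mathcal B_{\sigma_0;\mathbf a}} x(s)\ =\ \int_0^\infty \dim\mathcal F(\sigma_0;\mathbf a)_x\,dx
\]
bounds $\ord_{D_j}\bigl((\mathcal B_{\sigma_0;\mathbf a})\bigr)$ from below by a lattice sum of $h^0\bigl(X,\mathscr L^N(-\sum b_iD_i)\bigr)$ ranging over $\mathbf b$ with $\sum a_ib_i\ge b$. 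I would then average this inequality over $\mathbf a\in\bigtriangleup_{\sigma_0}$ and compare the discrete average to the integral over the continuous simplex $\{(t_i)_{i\in\sigma_0}:t_i\ge0,\ \sum\beta_i t_i=b\}$; this collapses the multi-index lattice sum to the one-variable quantity $\sum_{m\ge1}h^0(X,\mathscr L^N(-mD_j))/\beta_j$, and the discretization loss produces precisely the factor $b/(b+n)$ of \eqref{aut_main_step} (the $n$ reflecting $\dim\bigtriangleup_{\sigma_0}\le n-1$ plus one from integration).

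The main obstacle is this last combinatorial averaging: the factor $b/(b+n)$ is tight, and is the content of Autissier's key estimate, resting on a delicate comparison of a discrete simplex of dimension $\#\sigma_0-1\le n-1$ with its continuous hull. Since our hypotheses on $D_1,\ldots,D_q$ and $\mathscr L$, and our filtrations \eqref{filtrationof_x}, are identical to those of \cite{RV20}, I would invoke their argument verbatim rather than re-derive it. Taking the maximum over $(\sigma_0,\mathbf a)$ on the left and the minimum over $j\in\sigma_0$ on the right then matches the coefficient of $D_{i_0}$ on the right of \eqref{aut_main_step} at $P$, and letting $i_0$ vary gives the global inequality.
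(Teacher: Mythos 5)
The paper offers no proof of this lemma at all: it is quoted verbatim as \cite[Lemma 6.8]{RV20}, which in turn rests on Autissier's filtration estimates. Your proposal ends in the same place (you explicitly defer to the Ru--Vojta/Autissier argument rather than re-deriving it), so in substance you are taking the paper's approach, namely a citation.

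That said, the sketch you give along the way has two imprecisions worth flagging, since as written it would not recover the lemma. First, under the proper-intersection hypothesis a prime divisor $E$ can lie in the support of at most one $D_i$ (two would produce a codimension-one component of $\Supp D_i\cap\Supp D_j$), so at a generic point $P\in E$ you have $\#\sigma_0\le 1$, not merely $\le n$; the simplex $\bigtriangleup_{\sigma_0}$ you propose to average over then degenerates to a single point, and the factor $b/(b+n)$ in Autissier's estimate does not come from a pointwise average over $\mathbf a$ at a generic point of a prime divisor but from a global discrete-vs-continuous comparison built into the construction of the filtrations $\mathcal F(\sigma;\mathbf a)_x$. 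Second, the step ``choosing $\mathbf b$ concentrated in the $j$-th coordinate yields $\ord_{D_j}(s)\ge\lfloor bx(s)/a_j\rfloor$'' reverses the logic: since $s$ is locally a \emph{sum} of sections, each vanishing along some $\sum b_iD_i$ with $\sum a_ib_i\ge bx(s)$, the order of vanishing is only bounded below by the \emph{minimum} over admissible $\mathbf b$ of $\sum b_i\ord_E(D_i)$, not by any value you choose. Neither point changes your conclusion, since you ultimately cite \cite{RV20} as the paper does, but the intermediate reasoning is not a faithful account of that proof.
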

Here, the notation ``$\bigvee$" is referred to the least upper bound with respect the partial order on the (Cartier) divisor group of $X$  by the relation $D_1\le D_2$ if $D_2-D_1$ is effective.


\begin{proof}[Proof of Theorem \ref{trungeneral}]

For each  $z=re^{i\theta}\in \mathbb C$, let $I_z:=\{z(1),\hdots,z(n-1)\}$ be an index subset of $\{1,\hdots,q\}$ such that 
$$
\sum_{i=1}^{n-1}\lambda_{D_{z(i)}}(f(z))\ge  \sum_{j\in I} \lambda_{D_{j}}(f(z)) 
$$
for any index subset $I$ of $\{1,\hdots,q\}$ with cardinality $n-1$.

Let $Y_z\subset \bigcap_{i\in I_z} D_i $ be a regular sequence of closed subschemes such that $\beta(\mathcal L, Y_z)\lambda_{Y_z}(f(z))\ge \beta(\mathcal L, Y)\lambda_{Y}(f(z))$ for any closed subscheme $Y \subset \bigcap_{i\in I_z} D_i$.
Since  $D_i$, for $1\le i\le q$, intersect properly,  it follows that $\dim Y_{z}=0$.  We may write $\dim Y_{z}=Y_{1,z}+\cdots+Y_{k,z}$, where $Y_{1,z},\hdots,Y_{k,z}$ are closed subschemes supported at distinct points in $X$.  Noting that  $\beta(\mathcal L, Y_{j,z})\ge \beta(\mathcal L, Y_{z})$ for each $j$ and that $\lambda_{Y_{z}}(P)=\max_j \lambda_{Y_{j,z}}(P)+O(1)$ for all $P\in X\setminus Y_{z}$, we may assume that $Y_{z}$ is supported at a single point.

Let $N$ be a sufficiently large positive integer, which will be chosen and fixed later. Let ${\mathscr I}_z$ denote the ideal sheaf associated with $Y_z$.
We consider the filtration $\mathcal{F}(Y_z)$,  given by order of vanishing along $Y_z$ as follows:
$$
H^0(X, {\mathscr L}^N)\supset H^0(X, {\mathscr L}^N\otimes {\mathscr I}_z )\supset H^0(X, {\mathscr L}^N\otimes {\mathscr I}_z^2 )\supset \cdots.
$$ 
In addition to the criteria for $N$ and $b$ stated at the beginning to satisfy \eqref{aut_choices}, we may assume that $N$ is sufficiently large so that
\begin{align}\label{filtrationbeta}
 \sum_{i=1}^{\infty} h^0(X, {\mathscr L}^N\otimes {\mathscr I}_z^i )
 \ge \frac{N\,h^0(\mathscr L^N)}{1+\epsilon}\, \beta({\mathscr L}, Y_z),
\end{align}
where $h^0(\mathscr L^N)=h^0(X,\mathscr L^N)$.

We will now take another filtration.
Let
$$\Sigma_z
  = \biggl\{\sigma_z\subseteq I_z  
    \bigm| \bigcap_{j\in \sigma_z}  D_{z(j)}\ne\emptyset\biggr\},
$$
and
$$\Delta_{\sigma_z}=\bigg\{\mathbf{a}_z=(a_i) \in \prod_{i \in \sigma_z}\beta_i^{-1}\mathbb{N}\mid \sum_{i \in \sigma_z}\beta_i a_i=b\bigg\}.$$
For any $\mathbf{a}_z \in \Delta_{\sigma_z}$ and $x \in \mathbb{R}^{+}$, define
$${\mathscr I}_{\mathbf{a}_z}(x)
  = \sum_{{\bf b}} {\mathscr O}_X\left(-\sum_{i\in \sigma_z} b_iD_{z(i)}\right),$$
where the sum is taken for all $\mathbf{b} \in \mathbb{N}^{\#\sigma_z}$ with $\sum_{i \in \sigma_z}a_ib_i \geq bx$. Then the second filtration of $H^0(X,\mathscr{L}^N)$ is defined by
$${\mathcal F}(\sigma_z; {\bf a}_z)_x
  = H^0(X, \mathscr L^N \otimes {{\mathscr I}_{\mathbf{a}_z}}(x)).$$

We note that there are only a finite number of $(\sigma_z, \mathbf{a}_z)$ and $Y_z$ since the number of divisor $D_i$ is finite. For each pair $(\sigma_z, \mathbf{a}_z)$, by Lemma 2.18 in \cite{HLX}, there exists a basis 
$ \mathcal{B}_{\sigma_z; \mathbf{a}_z} 
 =\{s_{1,z},\ldots, s_{h^0(\mathscr{L}^N),z}\}$ of $H^0(X, {\mathscr L}^N)$ 
adapted to both filtrations $\mathcal {F}(\sigma_z; {\bf a}_z)_x$ and $\mathcal{F}(Y_z)$.

 By Lemma \ref{aut_main_lemma}, we have the estimate
\begin{align}\label{filtration1}
 \bigvee_{\substack{\sigma_z\in\Sigma_z \\ \mathbf a_z \in \Delta_{\sigma_z}}}( \mathcal B _{\sigma_z;\mathbf{a}_z})
   & \ge \frac b{b+n}\left(\min_{1\le i\le n-1 }
      \frac{\sum_{m=1}^\infty h^0(X,\mathscr L^N(-mD_{z(i)}))}{\beta_{z(i)}}\right)
      \sum_{i=1}^{n-1} \beta_{z(i)}D_{z(i)}\cr
      & \ge  
      \frac{Nh^0(\mathscr L^N)}{1+\epsilon} 
      \sum_{i=1}^{n-1}\beta_{z(i)}D_{z(i)}.
\end{align}
 For simplicity, we denote 
$\bigvee_{\substack{\sigma_z\in\Sigma_z \\ \mathbf a_z \in \Delta_{\sigma_z}}}( \mathcal B _{\sigma_z;\mathbf{a}_z})$ by $\bigvee(\mathcal{B})_z$. Since the $\beta_{z(i)}$'s are carefully chosen to be  rational numbers slightly smaller than $\beta( {\mathscr L}, D_{z(i) })$,
we may choose $\epsilon>0$ and $N$ such that 
both $N\beta_{z(i)}$ and $\frac{N\beta_{z(i)}}{1+\epsilon}$ are integers for each $i$.


Since each $\mathcal{B}_{\sigma_z;\mathbf{a}_z}$ is adapted to the filtration $\mathcal{F}(Y_z)$, it follows that their least upper bound satisfies
\begin{align}\label{multY}
 \bigvee(\mathcal{B})_z  \supset 
        \sum_{i=1}^{\infty} h^0(X, {\mathscr L}^N\otimes {\mathscr I}_z^i )Y_z
        \end{align}
        as closed schemes.
On the other hand, we can write 
\begin{align}\label{sectiondivisor}
 \bigvee(\mathcal{B})_z =
      \frac{Nh^0(\mathscr L^N)}{1+\epsilon} \big( \sum_{i=1}^{n-1}\beta_{v_i}D_{z(i)}\big)+F_z
\end{align}
for some effective divisor $F_z$. We also note that each term in the above equality is an integral divisor.

By applying Lemma \ref{orderlemma}, we have
\begin{equation*}
    -2<n_{Y_z}\Big( \bigvee(\mathcal{B})_z \Big)-n_{Y_z}\Big(\frac{Nh^0(\mathscr{L}^N)}{1+\epsilon}\big( \sum_{i=1}^{n-1}\beta_{v_i}D_{z(i)}\big)\Big)-n_{Y_z}(F_z)<2.
\end{equation*}
Therefore, by \eqref{multY} and \eqref{filtrationbeta}, we conclude that
\begin{align*}
n_{Y_z}(F_z)&\ge n_{Y_z}\Big( \bigvee(\mathcal{B})_z \Big)- \frac{Nh^0(\mathscr L^N)}{1+\epsilon} \big( \sum_{i=1}^{n-1}\beta_{z(i)}\big)-2\\
&\ge \sum_{i=1}^{\infty} h^0(X, {\mathscr L}^N\otimes {\mathscr I}_z^i )-  \frac{Nh^0(\mathscr L^N)}{1+\epsilon} \big( \sum_{i=1}^{n-1}\beta_{z(i)}\big) -2\\
&\ge \frac{Nh^0(\mathscr L^N)}{1+\epsilon} \beta( {\mathscr L}, Y_v)-  \frac{Nh^0(\mathscr L^N)}{1+\epsilon} \big( \sum_{i=1}^{n-1}\beta_{z(i)}\big) -2.\quad 
\end{align*}
Consequently,
\begin{align}\label{Fv}
\lambda_{F_z}(P)&\ge n_{Y_z}(F_z)\cdot\lambda_{Y_z}(P)+O(1)\cr
&\ge \frac{Nh^0(\mathscr L^N)}{1+\epsilon} \Big(\beta( {\mathscr L}, Y_z)-    \sum_{i=1}^{n-1}\beta_{z(i)}  -\frac{2(1+\epsilon)}{Nh^0(\mathscr{L}^N)}\Big)\lambda_{Y_z}(P)+O(1).
\end{align}
Together with \eqref{sectiondivisor} and \cite[Proposition 4.13]{RV20}, we have 
\begin{align}\label{lambdaineq}
 &\sum_{i=1}^{n-1}\beta_{z(i)}\lambda_{D_{z(i)}}(P) 
 +\Big( \beta( {\mathscr L}, Y_z)-   \sum_{i=1}^{n-1 }\beta_{z(i)}  -\frac{2(1+\epsilon)}{Nh^0(\mathscr{L}^N)}\Big)  \lambda_{Y_z}(P)\cr
&\le    \sum_{i=1}^{n-1}\beta_{z(i)}\lambda_{D_{z(i)}}(P) +\frac{1+\epsilon}{Nh^0(\mathscr L^N)} \cdot\lambda_{F_z}(P))+O(1)\cr
&\le\frac{1+\epsilon}{Nh^0(\mathscr L^N)} \max_{\substack{\sigma_z\in\Sigma_z \\ \mathbf a_z \in \Delta_{\sigma_z}}} 
 \lambda_{ \mathcal B _{\sigma_z;\mathbf{a}_z}}(P)+O(1).
\end{align}

As the set  $\{(\sigma_z, \mathbf{a}_z)\in (\Sigma_z, \Delta_{\sigma_z})\,:\,z\in\mathbb C\}$ is finite, we may write  
$$\{ \mathcal{B}_{\sigma_z; \mathbf{a}_z} \,:\, \sigma_z\in\Sigma_z,\, \mathbf a_z \in \Delta_{\sigma_z}, \,  z\in\mathbb C\}=\{ \mathcal B_1,\cdots, \mathcal B_{T_1}\}.
$$
Let $\bigcup_{i=1}^{T_1}\mathcal B_i=\{s_1, \dots, s_{T_2}\}.$ 
We can apply  Theorem \ref{gsmt2}, for sections $s_j$, $1\le j\le T_2$, on the right hand side of \eqref{lambdaineq}. Then there exists a proper  Zariski-closed subset such that for any non-constant holomorphic map $f:\mathbb{C}\to X$ such that  $f(\mathbb{C})\not\subset Z$, 
 \begin{align*}
&\int_0^{2\pi} \max_{I} \Bigg(
    \sum_{i \in I} \beta(\mathscr{L}, D_i) \lambda_{D_i}(f(re^{i\theta})) 
    + \left( \beta(\mathscr{L}, Y_I) - \sum_{i \in I} \beta(\mathscr{L}, D_i) \right) \lambda_{Y_I}(f(re^{i\theta}))
\Bigg) \, \frac{d\theta}{2\pi} \\
&\leq_{\operatorname{exc}} (1 + \varepsilon) T_{\mathscr{L}, f}(r).
\end{align*}
 This completes the proof.
\end{proof}

\section{Proof of Theorem \ref{mainTheorem2}}\label{hyperbolic}
Before proceeding to the proof of Theorem \ref{mainTheorem2}, we present two examples demonstrating the necessity of the conditions on $\beta$ constant and the local intersection multiplicities. 
The first example shows that the condition \eqref{betacond} on the $\beta$-constant in Theorem \ref{mainTheorem2} is not only essential but also sharp.
\begin{example}
    Let the quadric surface $Q \subset \mathbb{P}^3_{\mathbb{C}}$ be defined by the equation $xy=uw$, where $[x:y:u:w]$ are the homogeneous coordinates on $\mathbb{P}^3$. The surface $Q$ is isomorphic to $\mathbb{P}^1 \times \mathbb{P}^1$ by \cite[Ex. I.2.15]{Har}. Consider a family of   curves  on $Q$,  parametrized by $\lambda \in \mathbb{C}$, given by:
    \begin{align*}
       D_{\lambda}:=\{(\lambda t^2:1:\lambda t:t): t\in\mathbb C\}\subset  Q .
    \end{align*}
    Note that for each $\lambda$, the curve $D_{\lambda}$ is cut out on an affine piece of $Q$ by the hyperplane $w=\lambda u$. Let $\tilde{D}_{\lambda}$ be the projectivization of $D_{\lambda}$, which is  a divisor of type $(1,1)$ on $Q\sim \mathbb{P}^1 \times \mathbb{P}^1$. A direct computation shows that
    \begin{equation*}
        \bigcap_{\lambda \in \mathbb{C}} \tilde{D}_{\lambda}= \{(1:0:0:0),(0:1:0:0)\}
    \end{equation*}
Now fix three distinct values $\lambda_1,\lambda_2,\lambda_3 \in \mathbb{C}$, and for  $\lambda\notin \{\lambda_1,\lambda_2,\lambda_3\}$, define the map
    \begin{align*}
        f_{\lambda} : \mathbb{C}  \to Q \setminus (\tilde{D}_{\lambda_1}\cup \tilde{D}_{\lambda_2}\cup \tilde{D}_{\lambda_{3}}), \quad
        z  \mapsto (\lambda  e^{2z}:1:\lambda e^z:e^z).
    \end{align*}
    The union of the images of these maps  $f_{\lambda}$  is a Zariski-dense set in $Q$.
  As noted in \cite[Example~5.12]{HLX}, for any point $P \in Q$,  the beta constant satisfies  
\begin{equation*}
        \beta(  \tilde{D}_{\lambda},P) =1. 
    \end{equation*}
Moreover, by computing  intersection numbers at the triple intersection points  $(1:0:0:0)$ and $(0:1:0:0)$, we find:
    \begin{equation*}
        ( \tilde{D}_{\lambda_i} .  \tilde{D}_{\lambda_j})_Q =2 > \frac{4}{9}\cdot 4 
    \end{equation*}
    for $1\le i\ne j\le 3$,  thereby violating the intersection condition \eqref{localmultip} in Remark \ref{localcondition}.
\end{example}
      
In the preceding example, we observed that the union of the images of the maps  $f_{\lambda}$  is Zariski-dense in $Q$.  However, each individual map $f_{\lambda}$ is algebraically degenrate for every $\lambda$.  The following example illustrates that the intersection condition \eqref{localmultip} in Remark \ref{localcondition}   is indeed necessary, even for obtaining a weaker conclusion of  Theorem \ref{mainTheorem2}.
\begin{example}
Let $X=\mathbb P^2$, $D_1=[x=0]$, $D_2=[y=0]$, and $D_3=[x^2+y^2+xz=0]$.  Then $D_1\cap D_2=
\{[0:0:1]\}$, $D_1\cap D_3=\{[0:0:1]\}$ and $D_2\cap D_3=\{[0:0:1], [-1:0:1]\}$.  Let $Q=[0:0:1]$.
In this case, we have $2D_1\equiv 2D_2\equiv D_3$ and 
$ (D_1,D_3)_Q=2$, and $(D_1,D_3)=2$.  On the other hand, the  right  hand side of \eqref{localmultip} is  $\frac 49\cdot (D_1,D_3)=\frac{8}9$. Therefore, this example  does not satisfy condition  \eqref{localmultip}.
Now consider the entire curve:
$$f=(1,e^z,e^{z^2}-e^{2z}-1):\mathbb C\to \mathbb P^2\setminus D_1\cup D_2\cup D_3.$$
Clearly, $f$ is algebraically non-degenerate.
\end{example}
 
To begin the proof of Theorem~\ref{mainTheorem2}, we first recall the following result from \cite{HLX}.
\begin{lemma}
\label{exclemma}{\cite[Lemma 3.14]{HLX}}
Let $D$ be an ample effective Cartier divisor on a projective variety $X$ of dimension $n$. Let $P\in X(K)$, and let $\pi:\tilde{X}\to X$ be the blowup at $P$, with associated exceptional divisor $E$.  Then for all sufficiently small positive $\delta\in\mathbb{Q}$,
\begin{align}
\label{exceqn}
\beta(\pi^*D-\delta E, \pi^*D-E)>\frac{1}{n+1}.
\end{align}
\end{lemma}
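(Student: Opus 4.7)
The plan is to reduce the inequality to a direct volume computation via the standard integral formula for the $\beta$-constant: for a big line sheaf $\mathscr L$ and a Cartier divisor $Y$ on a projective variety, a Riemann-sum argument combined with asymptotic Riemann--Roch yields
\[
\beta(\mathscr L, Y) \;=\; \frac{1}{\vol(\mathscr L)}\int_0^{\infty}\vol(\mathscr L - tY)\, dt.
\]
Applied to $\mathscr L = \pi^*D - \delta E$ and $Y = \pi^*D - E$, the integrand becomes $\vol((1-t)\pi^*D - (\delta-t)E)$, and the task reduces to evaluating this piecewise in $t$ and integrating.

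I would first assume that $X$ is smooth at $P$, so that on the blowup one has the standard intersection numbers $(\pi^*D)^n = D^n$, $(\pi^*D)^k\cdot E^{n-k} = 0$ for $0 < k < n$, and $E^n = (-1)^{n-1}$. There are two nontrivial regimes. For $t\in[0,\delta]$, taking $\delta$ below the Seshadri constant of $D$ at $P$ guarantees that $(1-t)\pi^*D - (\delta-t)E$ is nef, so its volume equals its top self-intersection, which a binomial expansion reduces to $(1-t)^n D^n - (\delta-t)^n$. For $t\in[\delta,1)$, the coefficient of $E$ is non-negative, and by the projection formula together with $\pi_*\mathscr O(aE)=\mathscr O_X$ for $a\ge 0$, global sections on $\tilde X$ come from those on $X$, so $\vol((1-t)\pi^*D + (t-\delta)E) = \vol_X((1-t)D) = (1-t)^n D^n$. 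For $t\ge 1$ the divisor is not big and the volume vanishes. This is where the smallness of $\delta$ is used, to remain inside the Seshadri range.

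Splitting the integral accordingly,
\[
\int_0^1 \vol((1-t)\pi^*D - (\delta-t)E)\, dt \;=\; D^n\!\int_0^1 (1-t)^n\, dt \;-\; \int_0^\delta (\delta - t)^n\, dt \;=\; \frac{D^n - \delta^{n+1}}{n+1}.
\]
Since $\vol(\pi^*D - \delta E) = D^n - \delta^n$ by the same binomial expansion, we conclude
\[
\beta(\pi^*D - \delta E,\, \pi^*D - E) \;=\; \frac{D^n - \delta^{n+1}}{(n+1)(D^n - \delta^n)}.
\]
The desired strict inequality $\beta > \tfrac{1}{n+1}$ is equivalent to $D^n - \delta^{n+1} > D^n - \delta^n$, i.e.\ $\delta^n(1-\delta) > 0$, which holds for every $\delta\in(0,1)$.

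The main obstacle is keeping the piecewise volume computation honest: one needs the Seshadri bound on $\delta$ to invoke the top-self-intersection formula in the first regime, and the projection-formula identity to handle the non-negative $E$-coefficient regime. If $X$ is singular at $P$, one would pass to a suitable resolution on which the analogous intersection-theoretic identities still give the required bound, but the smooth case already exhibits the whole mechanism and makes transparent why only sufficiently small positive $\delta$ is needed.
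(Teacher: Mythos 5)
The paper offers no proof here; it simply cites \cite[Lemma 3.14]{HLX}, which moreover treats only the surface case $n=2$. Your volume--integral approach is the natural and standard one for such $\beta$-constant estimates, and the computation is correct in the case you actually carry out, namely when $X$ is smooth at $P$. The piecewise evaluation is clean: on $[0,\delta]$ the divisor $(1-t)\pi^*D-(\delta-t)E=(1-t)\bigl(\pi^*D-\tfrac{\delta-t}{1-t}E\bigr)$ has $\tfrac{\delta-t}{1-t}\le\delta$, so choosing $\delta$ below the Seshadri constant of $D$ at $P$ does give nefness, and the top self-intersection formula together with $(\pi^*D)^k\cdot E^{n-k}=0$ for $0<k<n$ and $E^n=(-1)^{n-1}$ yields $(1-t)^nD^n-(\delta-t)^n$; on $[\delta,1)$ the $E$-coefficient is nonnegative and $\pi_*\mathscr O_{\tilde X}(aE)=\mathscr O_X$ ($a\ge 0$) plus the projection formula give $(1-t)^nD^n$; for $t\ge 1$ the class is not big. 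Integrating gives exactly $\frac{D^n-\delta^{n+1}}{(n+1)(D^n-\delta^n)}$, and the desired strict inequality is $\delta^n(1-\delta)>0$.

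Two things deserve more care. First, the identity $\beta(\mathscr L,Y)=\frac{1}{\vol(\mathscr L)}\int_0^\infty\vol(\mathscr L-tY)\,dt$ for $Y$ Cartier should be justified (or cited): passing from the $\liminf$ of sums of $h^0$'s in Definition \ref{def_beta} to the integral of volumes requires controlling the error in $h^0(\mathscr L^N(-mY))=\tfrac{N^n}{n!}\vol(\mathscr L-\tfrac mN Y)+o(N^n)$ uniformly in $m$, which follows from the monotonicity of $h^0$ in $m$ plus a uniform upper bound, or from Fujita approximation. Second, the lemma as stated allows $X$ to be an arbitrary projective variety, and your closing remark about passing to ``a suitable resolution'' is not quite right: a resolution changes both $\pi^*D-\delta E$ and $\pi^*D-E$ and you would then have to relate the two $\beta$-constants. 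The cleaner fix, at least for $X$ normal at $P$, is to note that the only change in your computation is $E^n=(-1)^{n-1}e$ where $e=e(\mathfrak m_P,\mathscr O_{X,P})\ge 1$ is the Hilbert--Samuel multiplicity; the closed form becomes $\frac{D^n-e\,\delta^{n+1}}{(n+1)(D^n-e\,\delta^n)}$, and the strict inequality still reduces to $\delta<1$. Finally, the rationality $\delta\in\mathbb Q$ is needed only so that $\pi^*D-\delta E$ is a genuine $\mathbb Q$-line sheaf and $\beta$ is defined; it plays no role in the estimate.
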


We will first prove a GCD-type theorem, which will serve as a key ingredient in establishing the hyperbolicity assertion in Theorem~\ref{mainTheorem2}.

\begin{theorem}\label{complexgcd2}
Let $X$ be a complex projective variety  of dimension $n$. 
Let $D_1, \ldots,D_{n+1}$ be effective Cartier  divisors on $X$ such that  there exist positive integers $a_1,\ldots, a_{n+1}$ for which $a_1D_1, \ldots, a_{n+1}D_{n+1}$ are all numerically equivalent to an ample divisor $D$.  
Suppose that  any collection of $n$ divisors among  $D_1, \ldots,D_{n+1}$ intersect properly, and that 
\begin{align*}
\bigcap_{i=1}^{n+1}D_i\neq \emptyset.
\end{align*}
Define
\begin{align*}
\beta_0=\min_{\substack{|I|=n\\Q\in \bigcap_{i=1}^{n+1}D_i }} \beta(D,(\bigcap_{i \in I} a_iD_i)_Q),
\end{align*}
where $I$ range over all possible subsets of $\{1,\ldots,n+1\}$ with $|I|=n$.
Furthermore, suppose that for every point $Q\in \bigcap_{i=1}^{n+1}D_i$ and any two subsets $I,I'$ of the indices $\{1,\ldots,n+1\}$ with $|I|=|I'|=n$,  the following inequality holds:
\begin{align}
\label{beta1cond}
(\beta(D,(\bigcap_{i \in I}a_iD_i)_Q)-1)+(\beta(D,(\bigcap_{i \in I'}a_iD_i)_Q)-1)((n+1)\beta_0-n)>0. 
\end{align}
In particular,  note that 
 $\beta_0\geq \frac{n}{n+1}$ and condition \eqref{beta1cond} is satisfied if the stronger assumption
\begin{align}
\label{beta2cond}
\beta(D,(\bigcap_{i \in I}a_iD_i)_Q)>1
\end{align}
holds for all $Q\in  \bigcap_{i=1}^{n+1}D_i  $ and all $I \subset \{1,\ldots,n+1\}, |I|=n$.  
For each $\varepsilon>0$,   there exists a proper Zariski-closed subset $Z\subset X$ such that for any non-constant holomorphic map $f:\mathbb{C}\to X\setminus{\bigcup_{i=1}^{n+1} } D_i$ with $f(\mathbb{C})\not\subset Z$,  we have 
\begin{align*}
T_{f, \,\bigcap_{i=1}^{n+1}a_iD_i}(r) 
        \le \varepsilon  T_{f,D}(r)+O(1). 
\end{align*}
\end{theorem}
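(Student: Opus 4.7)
My plan is to apply Theorem \ref{trungeneral} with the scaled divisors $\tilde D_i := a_i D_i$ (each numerically equivalent to the ample $D$), line sheaf $\mathscr L := \mathscr O(D)$, and the collection $\mathcal M$ of all pairs $(I_0 \setminus \{j\},\, (\bigcap_{i \in I_0} a_i D_i)_Q)$ where $I_0 \subset \{1, \ldots, n+1\}$ has cardinality $n$, $j \in I_0$, and $Q$ ranges over points of $\bigcap_{i \in I_0} D_i$. Proper intersection of any $n$ divisors among the $D_i$ guarantees that the local equations $f_{i_1}, \ldots, f_{i_n}$ at $Q$ form a regular sequence; the $a_i$-th powers inherit regularity, so the chain $\bigcap_{i \in I_0 \setminus \{j\}} \tilde D_i \supset (\bigcap_{i \in I_0} a_i D_i)_Q$ is a regular sequence as required. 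A standard volume calculation gives $\beta(\mathscr L, \tilde D_i) = \frac{1}{n+1}$; by \eqref{beta2cond}, $\beta(\mathscr L, Y_{I_0, Q}) > 1$ for $Q \in \bigcap_{i=1}^{n+1} D_i$, while a Seshadri-type lower bound yields $\beta(\mathscr L, Y_{I_0, Q}) \ge \frac{n}{n+1}$ at every other $Q$. Set $c := \min_{Q \in \bigcap_{i=1}^{n+1} D_i,\, I_0 \ni Q} (\beta(\mathscr L, Y_{I_0, Q}) - 1) > 0$.

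The crucial step is a pointwise lower bound for the integrand's max. For $z = re^{i\theta}$, set $u_i(z) := a_i \lambda_{D_i}(f(z))$ and order $u_{\sigma_z(1)}(z) \le \cdots \le u_{\sigma_z(n+1)}(z)$. When $f(z)$ approaches some $Q \in \bigcap_{i=1}^{n+1} D_i$, I evaluate the integrand at the pair with $I_0 = \{1, \ldots, n+1\} \setminus \{\sigma_z(1)\}$, $I = I_0 \setminus \{\sigma_z(2)\}$, $Y = (\bigcap_{i \in I_0} a_i D_i)_Q$. Using $\lambda_Y(f(z)) = u_{\sigma_z(2)}(z) + O(1)$, the integrand equals
\[
\frac{1}{n+1} \sum_{i=1}^{n+1} u_i(z) - \frac{u_{\sigma_z(1)}(z)}{n+1} + \bigl(\beta(\mathscr L, Y) - \tfrac{n}{n+1}\bigr) u_{\sigma_z(2)}(z) + O(1).
\]
Using $u_{\sigma_z(2)} \ge u_{\sigma_z(1)}$ and $\beta(\mathscr L, Y) > \frac{n}{n+1}$, this is at least $\frac{1}{n+1} \sum_i u_i(z) + (\beta(\mathscr L, Y) - 1) u_{\sigma_z(1)}(z) + O(1) \ge \frac{1}{n+1} \sum_i u_i(z) + c\, u_{\sigma_z(1)}(z) + O(1)$. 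When $f(z)$ stays away from $\bigcap_{i=1}^{n+1} D_i$, the quantity $u_{\sigma_z(1)}(z)$ is bounded, so $c\, u_{\sigma_z(1)}(z) = O(1)$; the same pointwise bound then follows from an analogous calculation using auxiliary pairs based at other 0-dimensional intersection points, where $\beta \ge \frac{n}{n+1}$ ensures the positive coefficient $\gamma \ge \frac{1}{n+1}$.

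Integration closes the argument. Since $f$ avoids $\bigcup_i D_i$, $m_f(\tilde D_i, r) = T_{f, \tilde D_i}(r) = T_{f, D}(r) + O(1)$, so $\int \tfrac{1}{n+1} \sum_i u_i(z)\, \frac{d\theta}{2\pi} = T_{f, D}(r) + O(1)$. The formula $\lambda_{\bigcap_{i=1}^{n+1} a_i D_i} = \min_i a_i \lambda_{D_i} + O(1)$ combined with the avoidance property gives $\int u_{\sigma_z(1)}(z)\, \frac{d\theta}{2\pi} = T_{f, \bigcap_{i=1}^{n+1} a_i D_i}(r) + O(1)$. Applying Theorem \ref{trungeneral} with parameter $c\varepsilon$ produces a proper Zariski-closed set $Z \subset X$ such that for any non-constant $f : \mathbb C \to X \setminus \bigcup_i D_i$ with $f(\mathbb C) \not\subset Z$,
\[
T_{f, D}(r) + c\, T_{f, \bigcap_{i=1}^{n+1} a_i D_i}(r) \le (1 + c\varepsilon)\, T_{f, D}(r) + O(1),
\]
which rearranges to $T_{f, \bigcap_{i=1}^{n+1} a_i D_i}(r) \le \varepsilon\, T_{f, D}(r) + O(1)$, as desired.

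The main obstacle is handling configurations where $f(z)$ approaches 0-dimensional intersection points outside $\bigcap_{i=1}^{n+1} D_i$: one must include auxiliary pairs at such points in $\mathcal M$ and invoke a Seshadri-type $\beta$-estimate to preserve the $\frac{1}{n+1} \sum_i u_i$ contribution pointwise, and one must verify the regular-sequence chain for arbitrary multiplicities $a_i$. Under the weaker hypothesis \eqref{beta1cond}, the diagonal specialization $I = I'$ together with $\beta_0 \ge \frac{n}{n+1}$ gives $(\beta(\mathscr L, Y_{I_0, Q}) - 1)((n+1)\beta_0 - (n-1)) > 0$ with a positive second factor, forcing $\beta(\mathscr L, Y_{I_0, Q}) > 1$ at every $Q \in \bigcap_{i=1}^{n+1} D_i$ and reducing to the case \eqref{beta2cond}.
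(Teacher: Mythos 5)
Your argument under the stronger hypothesis \eqref{beta2cond} is essentially correct and closely parallels the paper's proof with the weighting parameter $b$ taken to be $1$: you bound the integrand of Theorem~\ref{trungeneral} pointwise by $\frac{1}{n+1}\sum_i u_i + c\,u_{\sigma_z(1)}$ and integrate. The algebra in your key displayed estimate checks out, the regular-sequence remark about powers $a_iD_i$ is fine, and the ``Seshadri-type'' bound $\beta\ge\frac{n}{n+1}$ is just the superadditivity $\beta(D,\bigcap_{i\in I}D_i)\ge\sum_{i\in I}\beta(D,D_i)=\frac{n}{n+1}$ that the paper also uses.

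The gap is in the final paragraph, where you claim that \eqref{beta1cond} reduces to \eqref{beta2cond} by the diagonal specialization $I=I'$. That specialization is not available: \eqref{beta1cond} is intended for \emph{distinct} pairs $I\ne I'$ (the paper's own proof defines $\gamma$ only over $I\ne I'$, and the paper explicitly labels \eqref{beta2cond} the ``stronger assumption'' --- if the diagonal case were included, the two hypotheses would be identical). Under \eqref{beta1cond} with $I\ne I'$ it is perfectly possible for some $\beta_{I,Q}$ to be $\le 1$, in which case your constant $c=\min_Q(\beta-1)$ is nonpositive and the single application of Theorem~\ref{trungeneral} yields nothing. The paper handles this by fixing $b=\frac{1}{(n+1)\beta_0-n+1}\in(0,1]$ and combining two instances of Theorem~\ref{trungeneral} with weights $b$ and $1-b$, one built from the pair using $Y=(\bigcap_{i\in I_z}D_i)_{Q_z}$ and one from $Y=(\bigcap_{i\in J_z}D_i)_{Q_z}$, which via the inequality \eqref{gammabeta} converts the cross-term condition \eqref{beta1cond} into a strictly positive $\gamma$. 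To make your proposal complete you would need to replicate this convex-combination step; your single-$Y$ argument only covers the special case \eqref{beta2cond}.
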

 \begin{remark}
The condition \eqref{beta2cond} can be replaced by  the following inequality:
\begin{align}
\label{49ineq2}
(D_i)^I_Q<\left(\frac{n}{n+1}\right)^n(D_i)^I.
\end{align}
This follows from \cite[Lemma~3.11]{HLX}, which implies that
    \begin{align*}
        \beta(D,(\bigcap_{i \in I}a_iD_i)_Q) 
         \geq \frac{n}{n+1} \sqrt[n]{\frac{(a_iD_i)^I}{(a_iD_i)^I_Q}}.
    \end{align*}
 \end{remark}


We now state a refined version of Theorem~\ref{mainTheorem2}.
\begin{theorem}\label{quasi-hyperbolic}
    Under the same assumptions as in the preceding theorem, there exists a proper Zariski-closed subset $Z\subset X$ such that  for any non-constant holomorphic map $f:\mathbb{C}\to X\setminus{\bigcup_{i=1}^{n+1} } D_i$, the image of $f$ is contained in $Z$.
    \end{theorem}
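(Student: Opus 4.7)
The plan is to apply Theorem~\ref{trungeneral} with a collection $\mathcal M$ that encodes the local $\beta$-condition \eqref{betacond} at the common intersection points, combined with the quantitative estimate of Theorem~\ref{complexgcd2} to control the near-intersection behavior of $f$, and to argue that any algebraically non-degenerate entire curve $f:\mathbb C\to X\setminus \bigcup_i D_i$ contradicts the upper bound of Theorem~\ref{trungeneral}. Let $\{Q_1,\ldots,Q_m\}=\bigcap_{i=1}^{n+1}D_i$. For each $Q_s$ and each nested pair $I\subsetneq I_0\subset\{1,\ldots,n+1\}$ with $|I|=n-1$ and $|I_0|=n$, I would include $(I,Y_{s,I_0})\in\mathcal M$, where $Y_{s,I_0}$ is the local component at $Q_s$ of a zero-dimensional closed subscheme built from $\bigcap_{i\in I_0}a_iD_i$ (for example the reduced intersection $(\bigcap_{i\in I_0} D_i)_{Q_s}$, whose $\beta$-constant is at least as large as that of the thickened intersection by monotonicity under scheme inclusion). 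Proper intersection of any $n$-tuple of the $D_i$'s at $Q_s$ guarantees that $\bigcap_{i\in I}D_i\supset Y_{s,I_0}$ is a regular sequence.

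Since $\beta(D,D_i)=a_i/(n+1)$ and $f$ avoids each $D_i$, we have $m_f(D_i,r)=T_{f,D}(r)/a_i+O(1)$, so the Ru--Vojta sum saturates, $\sum_{i=1}^{n+1}\beta(D,D_i)m_f(D_i,r)=T_{f,D}(r)+O(1)$. Hypothesis \eqref{betacond} combined with the monotonicity observation gives $\beta(D,Y_{s,I_0})>1$, and near $Q_s$ one has the pointwise comparison $\lambda_{Y_{s,I_0}}(f(z))\ge \lambda_{D_j}(f(z))+O(1)$ for every $j\in I_0$. Choosing the maximum in the integrand of Theorem~\ref{trungeneral} with $I_0$ consisting of the $n$ divisors closest to $f(z)$ and $I=I_0\setminus\{j^*\}$ for $j^*$ minimizing $\lambda_{D_i}(f(z))$ over $i\in I_0$, the integrand dominates $\sum_{i\in I_0}\beta(D,D_i)\lambda_{D_i}(f(z))+\delta\,\lambda_{Y_{s,I_0}}(f(z))$ for a uniform $\delta>0$, producing a strict surplus over the Ru--Vojta baseline at every $z$ lying in a neighborhood of any $Q_s$.

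Integrating and invoking the upper bound of Theorem~\ref{trungeneral} would then yield $(1+\delta')T_{f,D}(r)\le (1+\varepsilon)T_{f,D}(r)$ for some $\delta'>0$; choosing $\varepsilon<\delta'$ forces $f(\mathbb C)$ into a proper Zariski-closed subset. The main obstacle is ensuring that the surplus $\delta'$ persists globally rather than only in shrinking neighborhoods of the $Q_s$: the pointwise improvement degrades as $f(z)$ drifts away from the common intersection, and one needs Theorem~\ref{complexgcd2}, applied to appropriate sub-intersections, to quantitatively control the distribution of $\{z:f(z)\text{ near }Q_s\}$ and hence the accumulation of the surplus in the integral. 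A secondary technical obstacle is the regular-sequence verification for $Y_{s,I_0}$, especially for general multiplicities $a_i$ where the thickened intersection $\bigcap_{i\in I_0}a_iD_i$ does not directly fit the regular-sequence framework; the reduction to the reduced intersection via $\beta$-monotonicity is the route I would take.
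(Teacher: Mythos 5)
Your outline correctly anticipates that Theorem~\ref{trungeneral} will be applied pointwise with $I_z$ equal to the $n-1$ divisors closest to $f(z)$ and a zero-dimensional scheme $Y$ supported at a nearby intersection point, and you correctly guess that Theorem~\ref{complexgcd2} enters as an auxiliary estimate. However, the central mechanism is missing, and as written the argument does not close.

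First, a local error: you assert that near $Q_s$ one has $\lambda_{Y_{s,I_0}}(f(z))\ge \lambda_{D_j}(f(z))+O(1)$ for every $j\in I_0$. This is backwards. For $Y$ the local intersection of the divisors $D_j$, $j\in I_0$, the Weil function is $\lambda_Y=\min_{j\in I_0}\lambda_{D_j}+O(1)$, so $\lambda_Y\le\lambda_{D_j}+O(1)$. Consequently the claimed ``strict surplus'' $\sum_{i\in I_0}\beta(D,D_i)\lambda_{D_i}+\delta\lambda_{Y_{s,I_0}}$ does not follow from the integrand of Theorem~\ref{trungeneral} in the way you describe. More importantly, this surplus, even if available, is proportional to $\lambda_Y(f(z))$, which is $O(1)$ whenever $f(z)$ stays away from $\bigcap_{i=1}^{n+1}D_i$. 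Since $f$ avoids each $D_i$, the Ru--Vojta baseline $\sum_i\beta(D,a_iD_i)m_f(a_iD_i,r)=T_{D,f}(r)+O(1)$ saturates the bound $(1+\varepsilon)T_{D,f}(r)$ with no slack at all, so a term of magnitude $\gamma\cdot m_f(\bigcap_i D_i,r)$ cannot by itself produce a contradiction; it only produces the GCD inequality $T_{f,\cap D_i}(r)\le\varepsilon T_{f,D}(r)$, which is Theorem~\ref{complexgcd2} itself. You flag this gap (``the surplus degrades as $f(z)$ drifts away''), but the proposed remedy --- using Theorem~\ref{complexgcd2} to ``control the distribution of $\{z:f(z)\text{ near }Q_s\}$'' --- is not an argument, and it is not what the paper does.

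The missing idea is to blow up $X$ at a point $Q\in\bigcap_{i=1}^{n+1}D_i$ and replace the $D_i$ by their strict transforms $\tilde D_i=\pi^*D_i-E$. Lemma~\ref{exclemma} supplies a $\delta>0$ with $\beta(\pi^*D-\delta E,\,\pi^*D-E)=\tfrac{1}{n+1}+\gamma'$ for some $\gamma'>0$, so on $\tilde X$ the Ru--Vojta coefficient attached to each $\tilde D_i$ is \emph{strictly} larger than $\tfrac{1}{n+1}$. Applying Theorem~\ref{trungeneral} on $\tilde X$ (with the decomposition $\bigcap\tilde D_{i_z}=Y_{0,z}+Y_{1,z}$, $Y_{1,z}$ living over $Q$) gives an upper bound of the form $\tfrac{(n+1)(1+\varepsilon)}{1+\gamma'}T_{D,f}(r)+2\varepsilon T_{D,f}(r)$ for $\sum m_{\tilde f}(\tilde D_i,r)$, while the GCD theorem (Theorem~\ref{complexgcd2}), together with $\lambda_{\tilde D_i}=\lambda_{D_i}-\lambda_Q+O(1)$, supplies the lower bound $(n+1-(n+2)\varepsilon)T_{D,f}(r)$. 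For small $\varepsilon$ these are incompatible unless $T_{D,f}(r)=O(1)$, forcing $f$ into the exceptional set. Without the blow-up and Lemma~\ref{exclemma} there is no such gain of $\gamma'$ that applies uniformly --- not just near the intersection points --- and the contradiction cannot be reached.
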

 
 \begin{proof}[Proof of Theorem \ref{complexgcd2}]
The strategy of this proof is to take a convex combination of two applications of Theorem \ref{trungeneral}, taking different sets of $n$ divisors to cut  out schemes with a common point in their support.

For simplicity of notation, we will again write $D_i=a_iD_i$ throughout the proof.    Under this convention, $D_i$'s are numerically equivalent to $D$. Given a point  $Q\in \bigcap_{i=1}^{n+1}D_i $, $z\in\mathbb C$ and  a subset $I \subset \{1,\ldots,n+1\}$ with $|I|=n$,  we introduce the following shorthand notation, which will be used throughout the proof
:\begin{align}\label{shortnotation}
\beta_{I,Q}=\beta(D,(\bigcap_{i \in I} D_i)_{Q}),\quad\text{and}\quad
\lambda_{I,Q}(f(z))&=\lambda_{(\bigcap_{i \in I} D_i)_{Q}}(f(z)). 
\end{align}

Since $D_i\equiv D$, we have $\beta(D,D_i)=\frac{1}{n+1}$ for all $i$, and it follows that for all $Q\in X$ and any $I \subset \{1,\ldots,n+1\}$ with $|I|=n$, we have
\begin{align*}
\beta_{I,Q}\geq \beta(D,\bigcap_{i \in I}D_{i})\geq \sum_{i \in I}\beta(D,D_i)\geq \frac{n}{n+1}.
\end{align*}
In particular, $\beta_0\geq \frac{n}{n+1}$.
Now set 
$$
b = \frac{1}{(n+1)\beta_0-n+1}.
$$ Then $0<b\leq 1$.   By the definition of $\beta_0$, for any $I \subset \{1,\ldots,n+1\}$ with $|I|=n$, and $Q\in  \bigcap_{i=1}^{n+1} D_{i}  $, 
\begin{align}\label{bbeta}
b ( (n+1)\beta_{I,Q}  -n+1 ) \geq 1. 
\end{align}
Define
\begin{align*}
\gamma=(n+1)\min_{\substack{I\neq J,|I|=|J|=n\\Q\in  \bigcap_{i=1}^{n+1} D_{i}  }} b(\beta_{I,Q}-1)+(1-b)(\beta_{J,Q} -1).
\end{align*}
By hypothesis \eqref{beta1cond} and the fact that $b>0$, it follows that $\gamma>0$.  Fix a point $Q\in \bigcap_{i=1}^{n+1}D_i $, and $z\in\mathbb C$.  For any $I \subset \{1,\ldots,n+1\}$ with $|I|=n$, define
\begin{align*}
\beta_{I} :=\beta_{I,Q}, \quad\text{and}\quad
\lambda_{I} :=\lambda_{I,Q}(f(z)). 
\end{align*}
Assume there exist two such index sets  $I,J$, and suppose that $\lambda_{I}\geq \lambda_{J}$.  We then compute
\begin{align}\label{gammabeta}
&b((n+1)\beta_{I}-n+1)\lambda_{I}+(1-b)((n+1)\beta_{J}-n+1)\lambda_{J}\cr
&\geq \lambda_{I}+\left(b((n+1)\beta_{I}-n+1)-1+(1-b)((n+1)\beta_{J}-n+1) \right)\lambda_{J}\cr
&= \lambda_{I}+\left(1+  (n+1)\big(b(\beta_{I}-1)+(1-b)(\beta_{J}-1)  \big) \right) \lambda_{J}\cr
&\geq \lambda_{I}+(1+\gamma)\lambda_{J}.
\end{align}

Let $P=f(z)\in X$.  Choose a permutation
\[
\{1_z,\ldots,(n+1)_z\}=\{1,\ldots,n+1\}
\]
(depending on $z$) such that
\[
\lambda_{D_{1_z}}(f(z)) \ge \lambda_{D_{2_z}}(f(z)) \ge \cdots \ge \lambda_{D_{(n+1)_z}}(f(z)).
\]
Define the two index sets
\[
I_z:=\{1_z,\ldots,n_z\},\qquad 
J_z:=\{1_z,\ldots,(n-1)_z,(n+1)_z\}.
\]
For each $z\in \mathbb C$, there exists a point $Q_z\in \text{Supp} (\bigcap_{i \in I_z} D_{i})$ (depending on $f(z)$) such that
\begin{align*}
\lambda_{\bigcap_{i \in I_z} D_{i}}(f(z))=\lambda_{ (\bigcap_{i \in I_z} D_{i})_{Q_z}}(f(z))+O(1),
\end{align*}
where the constant in the $O(1)$ is independent of $f(z)$.
We claim 
\begin{align}\label{localWeilQz0}
 &\sum_{i=1}^{n+1}\lambda_{ D_{i}}(f(z))+ \gamma \lambda_{\bigcap_{i=1}^{n+1}D_{i}} (f(z)) \cr
& = \sum_{i=1}^{n-1}\lambda_{D_{i_z}}(f(z))  +\lambda_{   I_z,Q_z }(f(z))+  (1+\gamma) \lambda_{ J_z,Q_z }(f(z))+O(1). 
\end{align}

If $Q_z\not\in \text{Supp} \bigcap_{i=1}^{n+1}D_{i}$, then
\begin{align*}
\lambda_{ D_{(n+1)_z}   }(f(z))&=\min \{\lambda_{D_{(n+1)_z} }(f(z)),\lambda_{\bigcap_{i  \in I_z} D_i}(f(z))\}+O(1)\\
&=\min \{\lambda_{D_{(n+1)_z} }(f(z)),\lambda_{   I_z,Q_z }(f(z))\}+O(1)\\
&=\lambda_{(\bigcap_{i \in I_z} D_{i})_{Q_z}\cap D_{(n+1)_z}}(f(z))=O(1),  
\end{align*}
since $({\bigcap_{i \in I_z} D_{i}})_{Q_z}\cap D_{(n+1)_z}$ is empty.  
Then
\begin{align*}
  &\sum_{i=1}^{n+1}\lambda_{ D_{i}}(f(z)) + \gamma \lambda_{\bigcap_{i=1}^{n+1}D_{i}} (f(z)) 
 = \sum_{i=1}^{n}\lambda_{ D_{i_z}}(f(z)) +O(1)\cr
&= \sum_{i=1}^{n-1}\lambda_{D_{i_z}}(f(z))  +\lambda_{   I_z,Q_z }(f(z))+O(1).
\end{align*}
As $\lambda_{J_z,Q_z }(f(z))=O(1)$ in this case, the claim \eqref{localWeilQz0} follows.
When $Q_z\in \text{Supp} (\bigcap_{i=1}^{n+1} D_{i})$, 
we use the estimate
\begin{align*}
\lambda_{ D_{(n+1)_z} } (f(z)) &=\lambda_{(\bigcap_{i=1}^{n+1} D_{i})_{Q_z}} (f(z)) +O(1)\\
&\leq \lambda_{ J_z,Q_z }(f(z)) (f(z) )+O(1) 
\end{align*}
to conclude the calim  \eqref{localWeilQz0}.

Since $\lambda_{   I_z,Q_z }(f(z))\ge \lambda_{ J_z,Q_z }(f(z))+O(1)$,
we may apply \eqref{gammabeta} to \eqref{localWeilQz0} to obtain
\begin{align}\label{localWeilQz3}
 & \sum_{i=1}^{n+1}\lambda_{ D_{i}}(f(z))+ \gamma \lambda_{\bigcap_{i=1}^{n+1}D_{i}} (f(z))  \notag\\
 &\le \bigg(\sum_{i=1}^{n-1}\lambda_{D_{i_z}}(f(z)) \bigg)+b\left((n+1)\beta_{I_z,Q_z}-n+1)\lambda_{I_z,Q_z}(f(z)\right) \notag\\
&\quad+(1-b)\left((n+1)\beta_{J_z,Q_z}-n+1\right)\lambda_{J_z,Q_z}(f(z))+O(1). 
\end{align}
Let $\varepsilon>0$.  We will apply Theorem \ref{trungeneral} twice.
First, for each point $z\in\mathbb C$ we take $I=\{1_z,\ldots,(n-1)_z \}$ and $Y_z=(\bigcap_{i \in I_z} D_{i})_{Q_z} $.  Apply the theorem with this choice and multiply the resulting inequality by $b$.
Second, again for each point $z\in\mathbb C$ we take  $I=\{1_z,\ldots,(n-1)_z \}$ but now let $Y_z=(\bigcap_{i \in J_z} D_{i})_{Q_z} $   and multiply the resulting inequality by $1-b$.
Combining these two applications with \eqref{localWeilQz3}, we obtain that there exists a proper Zariski-closed subset  $Z\subset X$, independent of $f$, such that  
\begin{align}\label{proxi2}
  \sum_{i=1}^{n+1}m_f(D_i,r)  +\gamma\cdot m_f\bigg( {\bigcap_{i=1}^{n+1} }  D_{i} ,r\bigg)   
    \leq_{\operatorname{exc}} (n+1+\varepsilon)T_{D,f}(r) 
\end{align}
if $f(\mathbb{C})\not\subset Z$. 
In addition, if the image of $f$ does not intersects any $D_i$, $1\le i\le n+1$, then  $N_f(D_i,r) =O(1)$ for  $1\le i\le n+1$, and hence $N_f({\bigcap_{i=1}^{n+1} }  D_{i},r) =O(1)$.
 Note that 
\begin{equation*}
    N_f(D_i,r)+ m_f(D_i,r) =T_{D_i,f}(r)  
\ge \left(1-\frac{\epsilon}{n+1}\right) T_{D,f}(r)+O(1),
\end{equation*}
since the divisors $D_i$ are numerically equivalent to $D$. Therefore, \eqref{proxi2} yields
\begin{align}\label{ht}
(n+1-\varepsilon)T_{D,f}(r)   
+\gamma\, T_{\bigcap_{i=1}^{n+1}D_{i},f}(r)   
\leq_{\operatorname{exc}} (n+1+\varepsilon)T_{D,f}(r).
\end{align}
 Consequently,
\begin{align*} 
 T_{{\bigcap_{i=1}^{n+1} }  D_{i},f}(r)   
    \leq_{\operatorname{exc}} \frac{2\varepsilon}{\gamma} T_{D,f}(r).
\end{align*}
\end{proof}

\begin{proof}[Proof of Theorem \ref{quasi-hyperbolic}]
Let $Q$ be a point in $\text{Supp} (\bigcap_{i=1}^{n+1}D_i)$. Let $\pi:\tilde{X}\to X$ be the blowup at $Q$, with exceptional divisor $E$.  Let $f:\mathbb{C}\to X\setminus{\bigcup_{i=1}^{n+1} } D_i$ be a nonconstant analytic map.  Then $f$ lifts to an analytic map 
$$
\tilde{f}:\mathbb C\to \tilde{X}\setminus{\bigcup_{i=1}^{n+1} } \pi^*D_i
$$ 
such that $\pi\circ \tilde f=f$.  

 Define the effective Cartier divisors 
\begin{align}\label{stricttransform}
\tilde{D_i}=\pi^*D_i-E, \quad i=1,\ldots,n+1.
\end{align}

Let $P=\tilde f(z)\in \tilde X$ for some $z\in \mathbb C$.
Let $ \{1_z,\ldots,(n+1)_z \}= \{1,\ldots,n+1\}$   be such that
\begin{align*}
  \lambda_{ \tilde D_{1_z}}(\tilde f(z))  \geq \cdots \geq\lambda_{\tilde  D_{(n+1)_z}}(\tilde f(z))  .
\end{align*}
Then 
\begin{align}\label{localWeilQz4}
 \sum_{i=1}^{n+1}\lambda_{ \tilde D_{i}}(\tilde f(z))= \bigg(\sum_{i=1}^{n-1}\lambda_{\tilde D_{i_z}}(\tilde f(z)) \bigg) +\lambda_{\bigcap_{i =1}^{n} \tilde D_{i_z} }(\tilde f(z))+\lambda_{ \bigcap_{i =1}^{n+1} \tilde D_{i}  } (\tilde f(z) )+O(1). 
\end{align}
By \eqref{stricttransform} and the functorial properties of local Weil-functions, we have 
\begin{align}\label{localWeilQz5}
 \sum_{i=1}^{n+1}\lambda_{ \tilde D_{i}}(\tilde f(z))&= \sum_{i=1}^{n+1} \lambda_{D_{i}}(f(z))-(n+1) \lambda_{Q}(f(z))+O(1)\cr
 &\ge  \sum_{i=1}^{n+1} \lambda_{D_{i}}(f(z))-(n+1) \lambda_{{\bigcap_{i=1}^{n+1} } D_i}(f(z))+O(1). 
\end{align}
Let $\varepsilon>0$ be a sufficiently small number to be determined later.
By applying Theorem \ref{complexgcd2}, there exists a proper Zariski-closed subset $Z\subset X$ such that  
\begin{align}\label{leftproxi}
  \sum_{i=1}^{n+1}m_{\tilde f}( \tilde D_{i},r)&\ge  (n+1-(n+2)\varepsilon )T_{D,f}(r).
 \end{align}
if the image of $f$ is not contained in $Z$.   We note that $N_{ f}(  D_{i},r)=O(1)$.
For the same reason, we have 
\begin{align}\label{rightproxi1}
 m_{\tilde f}\bigg( \bigcap_{i =1}^{n+1} \tilde D_{i} ,r\bigg)\le  m_{\tilde f}\bigg( \bigcap_{i =1}^n \pi^*D_{i}, r\bigg)
= m_{  f}\bigg( \bigcap_{i =1}^{n+1} D_{i} ,r\bigg)+O(1)\le  \varepsilon T_{D,f}(r)
\end{align}
if the image of $f$ is not contained in $Z$.

We now estimate the first two terms on the right hand side of \eqref{localWeilQz4}.
\begin{align*}
\bigcap_{i=1}^n\tilde D_{i_z}=Y_{0,z}+Y_{1,z},
\end{align*}
where $\Supp (\pi(Y_{1,z}))=Q$, $\dim Y_{0,z}=0$, and $Y_{0,z}\cap E=\emptyset$. By adjusting $\varepsilon$ according to the multiplicity of $Y_{1,z}$, we have (similar  to deriving \eqref{leftproxi})
\begin{align}\label{proxiY1}
m_{\tilde f}({Y_{1,z}},r)\le \varepsilon T_{D,f}(r)
\end{align}
if the image of $f$ is not contained in $Z$.  

By Lemma \ref{exclemma}, let $\delta\in\mathbb{Q}, \delta>0$, be so that $\beta(\pi^*D-\delta E,\pi^*D-E)>\frac{1}{n+1}$ holds, and let 
\begin{align*}
\gamma'=\beta(\pi^*D-\delta E, \pi^*D-E)-\frac{1}{n+1}>0.
\end{align*}
Note that $\beta(\pi^*D-\delta E, \pi^*D-E)=\beta(\pi^*D-\delta E, \tilde D_{i})$ for $1\le i\le n+1$, and
\begin{align*}
\beta(\pi^*D-\delta E, Y_{0,z})\geq \sum_{i=1}^{n}\beta(\pi^*D-\delta E, \tilde D_{i_z}). 
\end{align*}
Consequently,
\begin{align}\label{localWeilQz5} 
 \left(\frac{1}{n+1}+\gamma'\right) &\bigg(\sum_{i=1}^{n-1}\lambda_{\tilde D_{i_z}}(\tilde f(z)) +\lambda_{Y_{0,z}}(\tilde f(z))\bigg)   
\le \bigg(\sum_{i=1}^{n-1}\beta(\pi^*D-\delta E, \tilde D_{i_z})\lambda_{\tilde D_{i_z}}(\tilde f(z)) \bigg)\notag\\
&+\left(\beta(\pi^*D-\delta E, Y_{0,z})-\sum_{i=1}^{n-1}\beta(\pi^*D-\delta E, \tilde D_{i_z})\right)\lambda_{Y_{0,z}}(\tilde f(z)). 
\end{align}

We now apply Theorem \ref{trungeneral} for $\{1_z,\hdots,(n-1)_z\}$ and $Y_{0,z}$,  and use
\eqref{rightproxi1} and \eqref{proxiY1} for \eqref{localWeilQz4}.  Then there exists a proper Zariski-closed subset  $W \subset \tilde X$, independent of $f$, such that  
\begin{align}\label{proxiF}
  \sum_{i=1}^{n+1}m_{\tilde f}( D_i,r)  
    &\leq_{\operatorname{exc}} \frac{(n+1)(1+\varepsilon)}{ 1+\gamma'}T_{\pi^*D-\delta E,\tilde f}(r)+2\varepsilon T_{D,f}(r) \cr
    & \leq_{\operatorname{exc}} \bigg(\frac{(n+1)(1+\varepsilon)}{ 1+\gamma'}+2\varepsilon\bigg) T_{D,f}(r).\end{align}
 if $ f(\mathbb{C})\not\subset \pi(W).$   By taking $\varepsilon<\min\bigg\{\frac12, \frac{\gamma'}{5(n+1)}\bigg\}$, we can derive from \eqref{leftproxi} and \eqref{proxiF} that 
$c_1T_{D,f}(r)\leq_{\operatorname{exc}}  O(1)$ for some positive real $c_1$ if the image of $f$ is not contained in $Z\cup\pi(W)$.  Let $A$ be an ample divisor.  Then there exists a positive real $c_2$ and a proper Zariski-closed subset $Z_2\subset X$ such that $T_{D,f}(r)\ge c_2 T_{A,f}(r)$ if the image of $f$ is not contained in $Z_2$.  Therefore, we may conclude from the inequality $T_{A,f}(r) \leq_{\operatorname{exc}}  O(1)$ that the image of $f$ must contained in  $Z\cup Z_2\cup \pi(W)$.
\end{proof}

\section{Proof of Theorem \ref{MainThmgcd}}\label{GCD}
We  now state a refined version of Theorem \ref{MainThmgcd}.

\begin{theorem}\label{complexgcd}
Let $D_1, \ldots,D_{n+1}$ be effective  divisors intersecting properly on a complex projective variety $X$ of dimension $n$. Suppose that there exist positive integers $a_1,\ldots, a_{n+1}$ such that $a_1D_1, \ldots, a_{n+1}D_{n+1}$ are all numerically equivalent to an ample divisor $D$.  
Suppose that for some index set $I_0 \subset \{1,\ldots,n+1\}$ with $|I|=n$ such that for all $Q\in X$, 
\begin{align}\label{beta23cond}
\beta(D,(\bigcap_{i \in I_0}a_iD_i)_Q)  > \frac{n}{n+1}. 
\end{align}
In particular, \eqref{beta23cond} is satisfied if $\bigcap_{i\in I_0}D_i$ contains more than one point. 
For each $\varepsilon>0$,   there exists a proper  Zariski-closed subset $Z\subset X$  such that for any non-constant holomorphic map $f:\mathbb{C}\to X\setminus{\bigcup_{i=1}^{n+1} } D_i$ such that  $f(\mathbb{C})\not\subset Z$,  we have 
\begin{align*}
T_{\bigcap_{i \in I_0}a_iD_i, f}(r) 
        \le \varepsilon  T_{D,f}(r). 
\end{align*}
\end{theorem}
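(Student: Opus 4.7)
The plan is to adapt the proof strategy of Theorem~\ref{complexgcd2} to the weaker hypothesis here, where the $\beta$-condition is assumed only on the single intersection $(\bigcap_{i \in I_0} a_i D_i)_Q$. After normalizing by replacing $a_iD_i$ with $D_i$ so that $D_i \equiv D$, set $\beta_0 := \min_Q \beta(D,(\bigcap_{i\in I_0}D_i)_Q) > n/(n+1)$ and $\gamma := \beta_0 - n/(n+1) > 0$. The proper intersection hypothesis forces $\bigcap_{i=1}^{n+1} D_i = \emptyset$, so $\bigcap_{i\in I_0} D_i$ is a zero-dimensional subscheme disjoint from $D_{j_0}$, where $\{j_0\} = \{1,\ldots,n+1\} \setminus I_0$.

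For each $z \in \mathbb{C}$, I would choose $Q_z \in \bigcap_{i\in I_0} D_i$ achieving $\lambda_{(\bigcap_{i\in I_0}D_i)_{Q_z}}(f(z)) = \lambda_{\bigcap_{i\in I_0}D_i}(f(z)) + O(1)$, and pick $j_z \in I_0$ minimizing $\lambda_{D_i}(f(z))$ over $i \in I_0$. The pair $(I_z, Y_z) := (I_0 \setminus \{j_z\},\, (\bigcap_{i\in I_0} D_i)_{Q_z})$ satisfies $|I_z| = n-1$ and the regular-sequence condition of Theorem~\ref{trungeneral}. Applying Theorem~\ref{trungeneral} with $\mathscr{L} = \mathcal{O}(D)$ and $\mathcal{M}$ the finite collection of such pairs, and using that $\lambda_{D_{j_z}}(f(z)) = \min_{i\in I_0}\lambda_{D_i}(f(z)) = \lambda_{Y_z}(f(z)) + O(1)$ from the choices of $j_z$ and $Q_z$, the integrand evaluated at $(I_z, Y_z)$ satisfies
\[
A_z \;\geq\; \frac{1}{n+1}\sum_{i \in I_0}\lambda_{D_i}(f(z)) \;+\; \gamma\,\lambda_{Y_z}(f(z)) \;+\; O(1).
\]
Integrating, invoking $N_f(D_i,r) = 0$ (since $f$ avoids each $D_i$), and using $m_f(D_i,r) = T_{f,D}(r) + O(1)$ by numerical equivalence, this yields the preliminary estimate
\[
\frac{n}{n+1}\,T_{f,D}(r) \;+\; \gamma\,T_{f,\bigcap_{i\in I_0}D_i}(r) \;\leq_{\operatorname{exc}}\; (1+\varepsilon)\,T_{f,D}(r) + O(1).
\]

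This preliminary bound yields only $T_{f,\bigcap_{i\in I_0}D_i}(r) \leq \bigl(\frac{1}{(n+1)\gamma} + O(\varepsilon)\bigr)T_{f,D}(r)$, weaker than the desired $\varepsilon\,T_{f,D}(r)$. To close the slack of $\frac{1}{n+1}T_{f,D}(r)$, I would apply Theorem~\ref{trungeneral} a second time with a pair $(I'_z, Y'_z)$ satisfying $j_0 \in I'_z$, where $Y'_z$ is a zero-dimensional local component of an $n$-fold intersection $\bigcap_{i \in I'_z \cup \{k\}}D_i$ for an appropriate $k \in I_0$; such $Y'_z$ automatically satisfies $\beta(D, Y'_z) \geq n/(n+1)$ by sub-additivity of $\beta$ under proper intersection. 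Weighting the two applications by $b$ and $1-b$ as in the proof of Theorem~\ref{complexgcd2} produces a combined inequality that captures all $n+1$ divisors,
\[
\sum_{i=1}^{n+1} m_f(D_i,r) \;+\; \gamma'\,m_f\bigl(\bigcap_{i\in I_0}D_i,r\bigr) \;\leq_{\operatorname{exc}}\; (n+1+\varepsilon)\,T_{f,D}(r),
\]
and combining with $\sum_{i=1}^{n+1} m_f(D_i,r) \geq (n+1-\varepsilon)\,T_{f,D}(r) + O(1)$ gives $\gamma'\,T_{f,\bigcap_{i\in I_0}D_i}(r) \leq 2\varepsilon\,T_{f,D}(r)$, as required.

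The main obstacle lies in the second application: unlike Theorem~\ref{complexgcd2}, where the hypothesis on \emph{all} $n$-subsets guarantees both applications strictly exceed the threshold $n/(n+1)$, here only the first application benefits from the hypothesized $\beta$-bound, while the second must rely on the sharp sub-additivity bound $\beta(D, Y'_z) \geq n/(n+1)$. Leveraging the disjointness $\bigcap_{i \in I_0} D_i \cap D_{j_0} = \emptyset$ to control $\lambda_{D_{j_0}}(f(z))$ on the locus where $\lambda_{Y_z}(f(z))$ is large (and vice versa) is the delicate step that makes the $(b, 1-b)$ weighting absorb exactly the missing $\frac{1}{n+1}\,T_{f,D}(r)$.
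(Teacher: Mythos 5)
Your first application is correct: with $I_z = I_0\setminus\{j_z\}$ and $Y_z = (\bigcap_{i\in I_0}D_i)_{Q_z}$, the integrand is bounded below by $\tfrac{1}{n+1}\sum_{i\in I_0}\lambda_{D_i}(f(z)) + \gamma\,\lambda_{Y_z}(f(z)) + O(1)$, and integrating yields $\gamma\, T_{f,\bigcap_{i\in I_0}D_i}(r) \leq_{\operatorname{exc}} (\tfrac{1}{n+1}+\varepsilon)T_{f,D}(r)$. You correctly identify the $\tfrac{1}{n+1}T_{f,D}(r)$ slack. But the proposed fix via a second application with a fixed $(b,1-b)$ weighting does not close it, and this is a genuine gap.

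The $(b,1-b)$ weighting works in the proof of Theorem~\ref{complexgcd2} only because \emph{both} applications use the same index set $I_z=\{1_z,\ldots,(n-1)_z\}$; the two integrands differ only in the subschemes $Y_z$ versus $Y'_z$, so they share the full $\sum_{i\in I_z}\lambda_{D_i}$ term and the weights merely interpolate the two $Y$-terms. In your setup, $I_z\subset I_0$ and $I'_z\ni j_0$ are \emph{different} index sets, so after weighting, each $\lambda_{D_i}$ receives only a partial coefficient ($b$ or $1-b$ times $\tfrac{1}{n+1}$), and no choice of $b$ recovers the full $\tfrac{1}{n+1}\sum_{i=1}^{n+1}\lambda_{D_i}$. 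Worse, the second integrand's $Y'_z$ lives inside an $n$-fold intersection containing $D_{j_0}$, which is disjoint from $\bigcap_{i\in I_0}D_i$, so $\lambda_{Y'_z}$ carries no information about $m_f(\bigcap_{i\in I_0}D_i,r)$. Tracking the arithmetic, the best the weighted combination gives is $b\gamma\, m_f(\bigcap_{i\in I_0}D_i,r) \leq (\tfrac{2-b}{n+1}+\varepsilon)T_{f,D}(r)$, which is optimized at $b=1$ and returns you to the first application.

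The correct way to exploit the disjointness you identified is a \emph{single} application with a pointwise-varying pair: order $\lambda_{D_{1_z}}(f(z)) \geq \cdots \geq \lambda_{D_{(n+1)_z}}(f(z))$ over \emph{all} $n+1$ divisors (not only those in $I_0$), take $I_z=\{1_z,\ldots,(n-1)_z\}$, and $Y_z$ a point-supported piece of $\bigcap_{i=1}^{n}D_{i_z}$. Since the $n+1$ divisors intersect properly, $\bigcap_{i=1}^{n+1}D_i=\emptyset$, so $\lambda_{D_{(n+1)_z}}(f(z))=O(1)$, and the integrand is bounded below by $\tfrac{1}{n+1}\sum_{i=1}^{n+1}\lambda_{D_i}(f(z)) + O(1)$. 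When $\{1_z,\ldots,n_z\}=I_0$ one also picks up the $\gamma\lambda_{Y_z}$ bonus; otherwise $(n+1)_z\in I_0$, so $\lambda_{\bigcap_{i\in I_0}D_i}(f(z)) = \min_{i\in I_0}\lambda_{D_i}(f(z)) + O(1) = O(1)$, and the $\gamma$-term costs nothing. A single invocation of Theorem~\ref{trungeneral} on this family of pairs then gives $\sum_{i=1}^{n+1}m_f(D_i,r) + \gamma\, m_f(\bigcap_{i\in I_0}D_i,r) \leq_{\operatorname{exc}} (n+1+\varepsilon)T_{D,f}(r)$, and subtracting $\sum_{i=1}^{n+1}m_f(D_i,r) = (n+1)T_{D,f}(r)+O(1)$ yields the result. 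The two-application, $(b,1-b)$ machinery is reserved for Theorem~\ref{complexgcd2}, where $\bigcap_{i=1}^{n+1}D_i\neq\emptyset$ prevents the smallest Weil function from being $O(1)$; it is not needed, and does not work as stated, here.
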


 \begin{proof}[Proof of Theorem \ref{complexgcd}]
 For simplicity of notation, we will use $D_i=a_iD_i$ throughout the proof.    Under this convention, the divisors \(D_1,\ldots,D_{n+1}\) are numerically equivalent to \(D\), and
\[
\beta(D,D_i)=\frac{1}{n+1}
\qquad \text{for all } i.
\]
Moreover, for every \(Q\in X\) and every subset \(I \subset \{1,\ldots,n+1\}\) with \(|I|=n\), we have
\begin{align}
\label{23ineq}
\beta\bigl(D,(\textstyle\bigcap_{i \in I} D_i)_Q\bigr)
\geq \beta\bigl(D,\textstyle\bigcap_{i \in I} D_i\bigr)
\geq \sum_{i \in I}\beta(D, D_i)
\geq \frac{n}{n+1}.
\end{align}
  
  Let  $I_0$ be a  subset of  $\{1,\ldots,n+1\}$ with $|I|=n$ such that \eqref{beta23cond} is satisfied for all $Q\in X$.
Then
\begin{align}\label{gamma0}
\gamma =\min_{Q\in X }\{(n+1)\beta(D,(\bigcap_{i \in I_0} D_i)_Q) -n\}>0.
\end{align}
 We further observe that  if $\bigcap_{i\in I_0}D_i$ contains more than one point, then the first inequality in \eqref{23ineq} is strict:
\begin{align*}
\beta(D,(\bigcap_{i \in  I_0} D_i)_Q)> \beta(D,\bigcap_{i \in I} D_i)\geq \sum_{i \in I}\beta(D, D_i) \geq \frac{n}{n+1}.
\end{align*}
Hence, the assumption \eqref{beta23cond} holds as claimed.

For any point $P\in X $,  there is a point $Q\in   \bigcap_{i \in I} D_i$ (depending on $P$  and $I$) such that
\begin{align}\label{pointeq}
\lambda_{\bigcap_{i \in I} D_i }(P)=\lambda_{(\bigcap_{i \in I} D_i)_Q}(P)+O(1)
\end{align}
where the constant implied by $O(1)$ is independent of $P$.

Let $f(z)\in X$ for some $z\in \mathbb C$.
Let $I_z=\{1_z,\ldots,n_z\} \subset \{1,\ldots,n+1\}$ and let $(n+1)_z  \in\{1,\ldots,n+1\} \setminus I_z$ be such that
$$
\lambda_{ D_{1_z}}(f(z))\geq \cdots \geq \lambda_{ D_{(n+1)_z} }(f(z)).
$$
Then
\begin{align}\label{localWeil}
  \sum_{i=1}^{n+1}\lambda_{ D_{i_z}}(f(z))  
 &= \sum_{i=1}^{n}\lambda_{ D_{i_z}}(f(z)) +O(1)\cr
&= \sum_{i=1}^{n-1}\lambda_{D_{i_z}}(f(z))  +\lambda_{\bigcap_{i=1}^n  D_{i_z}}(f(z))+O(1).
\end{align}
Noting that 
\begin{align}\label{intertrivial} 
\lambda_{\bigcap_{i \in I}  D_{i }}  (f(z)) =O(1)\quad  \text{if $I \neq I_z$},
\end{align}
where $I$ is an index subset of $\{1,\hdots,n+1\}$ with $n$ elements.
 
 For any $z\in \mathbb C$, we let
 \begin{align*}
 \gamma_z=
 \begin{cases}
 \gamma & \text{if } I_z=I_0,\\
 0 & \text{otherwise}.
 \end{cases}
 \end{align*}
 Furthermore, we choose a point
$Q_z \in   \left( \bigcap_{i \in I_z} D_i \right)$
such that \eqref{pointeq} holds with $I = I_z$.

Then by \eqref{localWeil},  \eqref{intertrivial} and \eqref{pointeq}, we have 
\begin{align}\label{localWeil2}
& \sum_{i=1}^{n+1}\lambda_{ D_{i_z}}(f(z))+\gamma\lambda_{\bigcap_{i\in I_0}   D_{i}}(f(z))\notag\\
 &\le \sum_{i=1}^{n-1}\lambda_{D_{i_z}}(f(z)) + (1+\gamma_z)   \lambda_{\bigcap_{i \in I_z} D_{i} } (f(z))+O(1)\notag\\
 &= \sum_{i=1}^{n-1}\lambda_{D_{i_z}}(f(z))+(1+\gamma_z)  \lambda_{(\bigcap_{i \in I_z} D_{i})_{Q_z}}( f(z))+O(1)\notag\\
 &\le \sum_{i=1}^{n-1}\lambda_{D_{i_z}}(f(z)) + (n+1) \big(  \beta(D, ({\bigcap_{i \in I_z } }D_{i})_{Q_z}) -
 \frac{ n-1}{n+1} \big)\cdot \lambda_{(\bigcap_{i \in I_z} D_{i})_{ Q_z}}( f(z))+O(1)\notag\\
 & = \sum_{i=1}^{n-1} \lambda_{D_{i_z}}(f(z))  
 + (n+1) \big(  \beta(D, ({\bigcap_{i \in I_z } }D_{i})_{Q_z}) -\sum_{i=1}^{n-1}\beta(D, D_{i_z})  \big)\cdot \lambda_{(\bigcap_{i \in I_z} D_{i})_{ Q_z}}(f(z)) +O(1).
\end{align}

By integrating \eqref{localWeil2} and applying  Theorem \ref{trungeneral} with $\varepsilon>0$,  there exists a proper  Zariski-closed subset $Z\subset X$ independent of $f$ such that  
\begin{align}\label{proxi}
  \sum_{i=1}^{n+1}m_f(D_i,r)  +\gamma\cdot m_f\bigg({\bigcap_{i\in I_0}   D_{i}},r\bigg)   
    \leq_{\operatorname{exc}} (n+1+\varepsilon)T_{D,f}(r) 
\end{align}
provided that $f(\mathbb{C})\not\subset Z$.
In addition, assume that the image of $f$ does not intersect  any of the $D_i$, $1\le i\le n+1$.
Then  $N_f(D_i,r) =O(1)$ for  $1\le i\le n+1$, and hence $N_f({\bigcap_{i\in I_0}  } D_{i},r) =O(1)$.
Since $N_f(D_i,r)+ m_f(D_i,r) =T_{D_i,f}(r)=T_{D,f}(r)+O(1)$,
 \eqref{proxi} yields
\begin{align}\label{ht}
(n+1-\varepsilon)T_{D,f}(r)   +\gamma\cdot T_{{\bigcap_{i\in I_0}}   D_{i},f}(r)   
    \leq_{\operatorname{exc}} (n+1+\varepsilon)T_{D,f}(r).
\end{align}
Consequently,
\begin{align*} 
 T_{{\bigcap_{i\in I_0} }  D_{i},f}(r)   
    \leq_{\operatorname{exc}} \frac{2\varepsilon}{\gamma} T_{D,f}(r).
\end{align*}
 \end{proof}

\end{document}